\newtheorem{thm}{Theorem}
\newtheorem{prop}[thm]{Proposition}
\newtheorem{lem}[thm]{Lemma}
\theoremstyle{remark}
\newtheorem{rem}{Remark}
\newcommand{\comment}[1]{}
\newcommand{\mx}{\mathbf{x}}
\newcommand{\mI}{\mathbf{1}}
\newcommand{\CL}{\mathcal{L}}
\newcommand{\ad}{\operatorname{ad}}
\newcommand{\TW}{\operatorname{TW}}
\newcommand{\Pc}{\mathcal{P}}
\newcommand{\sgn}{\operatorname{sgn}}
\newcommand{\f}{\boldsymbol{f}}
\newcommand{\g}{\boldsymbol{g}}
\begin{document}

\thispagestyle{plain}

\title{The ancestral matrix of a rooted tree}

\author{Eric O. D. Andriantiana}
\address{Eric O. D. Andriantiana\\
Department of Mathematics (Pure and Applied)\\
Rhodes University, PO Box 94\\
6140 Grahamstown\\
South Africa}
\email{E.Andriantiana@ru.ac.za}

\author{Kenneth Dadedzi}
\address{Kenneth Dadedzi\\
Department of Mathematical Sciences\\
Stellenbosch University\\
Private Bag X1\\
Matieland 7602\\
South Africa}
\email{dadedzi@sun.ac.za}

\author{Stephan Wagner}
\address{Stephan Wagner\\
Department of Mathematical Sciences\\
Stellenbosch University\\
Private Bag X1\\
Matieland 7602\\
South Africa}
\email{swagner@sun.ac.za}

\thanks{This work was supported by the National Research Foundation of South Africa (grants 96236 and 96310).}

\subjclass[2010]{05C50, 05C05}
\keywords{rooted tree, ancestral matrix, spectrum, spectral radius, characteristic polynomial}

\maketitle

\begin{abstract}
Given a rooted tree $T$ with leaves $v_1,v_2,\ldots,v_n$, we define the ancestral matrix $C(T)$ of $T$ to be the $n \times n$ matrix for which the entry in the $i$-th row, $j$-th column is the level (distance from the root) of the first common ancestor of $v_i$ and $v_j$. We study properties of this matrix, in particular regarding its spectrum: we obtain several upper and lower bounds for the eigenvalues in terms of other tree parameters. We also find a combinatorial interpretation for the coefficients of the characteristic polynomial of $C(T)$, and show that for $d$-ary trees, a specific value of the characteristic polynomial is independent of the precise shape of the tree.
\end{abstract}

\section{Introduction}

In this paper, we will be interested in the combinatorial and spectral properties of a matrix associated with a rooted tree. A rooted tree has a distinguished vertex, called the root; the vertices of a rooted tree can be arranged in levels by their distance to the root: level $\ell$ consists of all vertices whose distance from the root is $\ell$. Thus the root is the only vertex at level $0$, and all the children (if there are any) of a level-$\ell$ vertex are at level $\ell+1$. A vertex without children will be called a leaf; this includes the root if it is the only vertex, but not otherwise. The set of leaves of a (rooted) tree $T$ will be denoted by $\CL(T)$, the number of leaves by $L(T)$.

Rooted trees occur naturally in many different areas, from data structures to phylogenetics. This work is an attempt to introduce the powerful framework of spectral graph theory to the world of rooted trees by studying what will be called the ancestral matrix of a rooted tree. In order to define it formally, we need a few ingredients. A rooted tree can be regarded as the Hasse diagram of a poset, where the root is the greatest (or least if we reverse the order) element. For any two elements $v,w$ of this poset, there is a unique supremum $v \vee w$, the least element that is simultaneously greater than or equal to both $v$ and $w$. In terms of the tree structure, this can be interpreted as the lowest element (farthest from the root) that is an ancestor of both $v$ and $w$. The ancestral level of $v$ and $w$ is the level of $v \vee w$, i.e., the greatest distance of a common ancestor from the root. We will denote it by $\ell(v \vee w)$. It is worth pointing out a connection between the ancestral level and distances: if $r$ denotes the root and $d(\cdot,\cdot)$ the usual graph distance, then we have
\begin{equation}\label{eq:dist_level}
d(v,w) = d(v,r) + d(w,r) - 2 \ell(v \vee w),
\end{equation}
since the path from $v$ to $r$ and the path from $w$ to $r$ both include the path from $v \vee w$ to $r$, while the remaining parts form the path from $v$ to $w$.

The ancestral level is a way to measure how close two vertices are. For example, if we interpret the rooted tree as a phylogenetic tree (see \cite{SempleSteel}), then it represents the point at which two species are separated. In an important data structure known as a trie (see e.g. \cite[Section 6.3]{Knuth}), where the leaves store data according to certain keys (strings over a given alphabet), the ancestral level is the length of the longest common prefix.

To define the ancestral matrix, we focus on the leaves. Let $v_1,v_2,\ldots,v_n$ be the leaves of a rooted tree $T$. The ancestral matrix $C(T)$ is defined by its entries $c_{ij}$ in the following way:
$$c_{ij} = \ell(v_i \vee v_j).$$
For the example in Figure~\ref{fig:first_example}, the ancestral matrix is
$$\begin{bmatrix} 2 & 1 & 0 & 0 & 0 & 0 \\ 1 & 2 & 0 & 0 & 0 & 0 \\ 0 & 0 & 2 & 1 & 1 & 1 \\ 0 & 0 & 1 & 2 & 1 & 1 \\ 0 & 0 & 1 & 1 & 3 & 2 \\  0 & 0 & 1 & 1 & 2 & 3 \end{bmatrix}.$$
The ancestral matrix is similar in nature to a meet matrix, see \cite{Haukkanen1996meet}. Meet matrices can be defined on arbitrary posets; their determinants are particularly well-studied. 
Some basic properties of the ancestral matrix are immediate: it is clearly always a symmetric matrix, and the diagonal entry is always the unique maximum in each row and column.

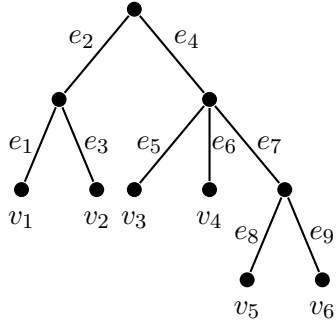
\begin{figure}[htbp]\centering  
\begin{tikzpicture}[thick, level distance=12mm, 
  every node/.style = {shape=circle, fill, inner sep=2pt}]
\tikzstyle{level 1}=[sibling distance=20mm]
\tikzstyle{level 2}=[sibling distance=10mm]
\tikzstyle{level 2}=[sibling distance=10mm]
\node {}
child {node {}
child {node {}} 
child {node {}}}
child {node {} 
child {node {}} 
child {node {}}
child {node {}
child {node{}}
child {node{}}
}};
\node [draw=none,fill=none] at (-1.5,-2.8) {$v_1$};
\node [draw=none,fill=none] at (-0.5,-2.8) {$v_2$};
\node [draw=none,fill=none] at (0,-2.8) {$v_3$};
\node [draw=none,fill=none] at (1,-2.8) {$v_4$};
\node [draw=none,fill=none] at (1.5,-4) {$v_5$};
\node [draw=none,fill=none] at (2.5,-4) {$v_6$};

\node [draw=none,fill=none] at (-1.5,-1.8) {$e_1$};
\node [draw=none,fill=none] at (-0.7,-0.4) {$e_2$};
\node [draw=none,fill=none] at (-0.5,-1.8) {$e_3$};
\node [draw=none,fill=none] at (0.7,-0.4) {$e_4$};
\node [draw=none,fill=none] at (0.2,-1.8) {$e_5$};
\node [draw=none,fill=none] at (1.2,-1.8) {$e_6$};
\node [draw=none,fill=none] at (1.8,-1.8) {$e_7$};
\node [draw=none,fill=none] at (1.5,-3) {$e_8$};
\node [draw=none,fill=none] at (2.5,-3) {$e_9$};

\end{tikzpicture}
\caption{Example of a rooted tree.}\label{fig:first_example}
\end{figure}

Another important structural property relates to the branches of a rooted tree: let $T_1$, $T_2$, \ldots, $T_k$ be the branches of a rooted tree $T$ (i.e., the connected components that remain when the root of $T$ is removed, each endowed with its natural root). For every $j \in \{1,2,\ldots,k\}$, let $E_j$ be a square matrix whose entries are all equal to $1$, and whose size (number of rows) equals the number of leaves of $T_j$. Then the ancestral matrix $C(T)$ has the following block diagonal form with respect to a suitable order of leaves:
\begin{equation}\label{eq:block_diag}
C(T) = \begin{bmatrix} C(T_1) + E_1 & 0 & 0 & \cdots & 0 \\ 0 & C(T_2) + E_2 & 0 & \cdots & 0 \\ 0 & 0 & C(T_3) + E_3 & \cdots & 0 \\ \vdots & \vdots & \vdots & \ddots & \vdots \\ 0 & 0 & 0 & \cdots & C(T_k) + E_k \end{bmatrix}.
\end{equation}
This is because the ancestral level of two leaves that lie in distinct branches is always $0$, while the ancestral level of two leaves in the same branch increases by $1$ in $T$.

The ancestral spectrum of a rooted tree $T$ is now defined in analogy to other graph spectra (such as the adjacency spectrum or the Laplacian spectrum) as the spectrum of the ancestral matrix $C(T)$. Note that this spectrum does not depend on the order of leaves, and that all eigenvalues are necessarily real since $C(T)$ is symmetric. In the example of Figure~\ref{fig:first_example}, the eigenvalues are (in decreasing order) $4+\sqrt{5},3,4-\sqrt{5},1,1,1$.

In analogy with the fact that the Laplacian and the signless Laplacian of a graph can be obtained as the product of an incidence matrix with its own transpose, a similar identity holds for the ancestral matrix. To this end, we define a path incidence matrix $I_p(T)$ of a rooted tree $T$. Let $P(u,v)$ be the set of edges on the path from vertex $u$ to vertex $v$ in a tree $T$. Suppose $v_1, v_2,\ldots , v_n$  and $e_1,e_2, \ldots , e_m$ are the leaves and the edges of a rooted tree $T$ with root $r$. The path incidence matrix $I_p(T)$ is defined as an $n\times m$ matrix whose entries are 
\[a_{ij} = \begin{cases} 1 & \text{if } e_j \in P(v_i,r), \\ 0 & \text{otherwise}. \end{cases} \]

The path incidence matrix for the rooted tree in Figure~\ref{fig:first_example} is
$$\begin{bmatrix} 
1 & 1 & 0 & 0 & 0 & 0 & 0 & 0 & 0 \\ 
0 & 1 & 1 & 0 & 0 & 0 & 0 & 0 & 0 \\
0 & 0 & 0 & 1 & 1 & 0 & 0 & 0 & 0 \\ 
0 & 0 & 0 & 1 & 0 & 1 & 0 & 0 & 0 \\ 
0 & 0 & 0 & 1 & 0 & 0 & 1 & 1 & 0  \\  
0 & 0 & 0 & 1 & 0 & 0 & 1 & 0 & 1 \end{bmatrix}.$$

Note that the row sums are equal to the respective depths of the leaves and the column sums provide information on the number of leaves ``below'' a certain edge. It is easy to see that for a rooted tree $T$, we have 
$$C(T) =  I_p(T)I_p(T)^t.$$
An immediate consequence of this identity is the fact that the ancestral matrix is positive semidefinite (in fact positive definite, as we will see in the next section). 

\section{The eigenvalues of the ancestral matrix}

This section will be devoted to the eigenvalues of the ancestral matrix of a rooted tree. The Rayleigh quotient will play an important role in this context: for a symmetric matrix $A$ and a vector $\mathbf{x}$, it is given by
$$R(A,\mx) = \frac{\mx^t A \mx}{\mx^t \mx}.$$
It is well known that $R(A,\mx) = \lambda$ if $\mx$ is an eigenvector for the eigenvalue $\lambda$, and that the greatest and least eigenvalue are given by
\begin{equation}\label{eq:rayleigh_max}
\sup_{\mx \neq \mathbf{0}} R(A,\mx) = \sup_{\|\mx\| = 1} R(A,\mx) = \sup_{\|\mx\| = 1} \mx^t A \mx
\end{equation}
and
\begin{equation}\label{eq:rayleigh_min}
\inf_{\mx \neq \mathbf{0}} R(A,\mx) = \inf_{\|\mx\| = 1} R(A,\mx) = \inf_{\|\mx\| = 1} \mx^t A \mx
\end{equation}
respectively.

We start our considerations with a lower bound on the eigenvalues. As it turns out, the eigenvalue $1$ plays a specific role, which is captured in the following theorem.

\begin{thm}
If $T$ is a rooted tree that does not only consist of the root, then all eigenvalues are greater than or equal to $1$. Moreover, the multiplicity of $1$ as an eigenvalue of $C(T)$ is given by
$$\big(\text{\textnormal{number of leaves of $T$}} \big) - \big( \text{\textnormal{number of non-root vertices of $T$ adjacent to a leaf}} \big).$$
A basis for the eigenspace of the eigenvalue $1$ is obtained in the following way: for every maximal (with respect to inclusion) $r$-tuple $w_1,w_2,\ldots,w_r$ of leaves that share a common parent, take all vectors with an entry $1$ in the row corresponding to $w_1$, an entry $-1$ in the row corresponding to $w_j$ for some $j \in \{2,3,\ldots,r\}$, and otherwise zeros. Moreover, if there is at least one leaf adjacent to the root, pick one such leaf $u$ and take the vector with an entry $1$ in the row corresponding to $u$ and otherwise zeros. The set of all these vectors is a basis for the eigenspace of $1$.
\end{thm}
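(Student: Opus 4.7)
I would prove all three assertions together by induction on the number of vertices of $T$, exploiting the block-diagonal decomposition \eqref{eq:block_diag} throughout. The base case (root with a single pendant leaf) gives $C(T) = [1]$, so all three claims are immediate.

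For the lower bound on eigenvalues I would show that $C(T) - I$ is positive semidefinite. By \eqref{eq:block_diag} it suffices to handle each block $C(T_i) + E_i - I$. If $T_i$ is a single leaf, the block is the $1 \times 1$ zero matrix; otherwise I rewrite it as $(C(T_i) - I) + E_i$, where the first summand is positive semidefinite by the inductive hypothesis for $T_i$ and the second is a rank-one positive semidefinite matrix.

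I would then verify directly that the listed vectors are eigenvectors for the eigenvalue $1$. For $\mx = e_{w_1} - e_{w_j}$ with $w_1, w_j$ sharing parent $p$, a short case analysis on where $v_i$ sits relative to the subtree rooted at $p$ shows $(C(T)\mx)_i$ equals $\pm 1$ when $v_i \in \{w_1,w_j\}$ (using $\ell(w_1) = \ell(p)+1$) and $0$ otherwise, because for any other leaf the two suprema $v_i \vee w_1$ and $v_i \vee w_j$ share their lowest common ancestor, which lies at or above $p$. For $\mx = e_u$ with $u$ a leaf adjacent to the root, $(C(T)\mx)_i = \ell(v_i \vee u)$ equals $1$ when $i=u$ and $0$ otherwise (the common ancestor being the root). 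Counting and independence are then bookkeeping: the first-type vectors with a fixed parent $p$ span the sum-zero subspace in the coordinates indexed by $p$'s leaf children (dimension $r_p - 1$), different parents have disjoint supports, and the second-type vector has coordinate sum $1$ and is therefore independent of all first-type vectors when present. A short arithmetic check confirms the total number of vectors equals $L(T) - P(T)$, where $P(T)$ is the number of non-root vertices of $T$ adjacent to a leaf.

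The main obstacle is the matching upper bound on the multiplicity. Writing this multiplicity as $\sum_i \mu(T_i)$ with $\mu(T_i)$ the multiplicity of $1$ in $B_i := C(T_i) + E_i$, a single-leaf branch contributes exactly $1$. For a non-trivial branch the key trick is that $B_i \mx = \mx$ forces $(C(T_i) - I)\mx = -(\mI^t \mx)\mI$, so pairing with $\mx$ gives
$$\mx^t (C(T_i) - I) \mx = -(\mI^t \mx)^2.$$
The left side is non-negative by the positive semidefiniteness established above, while the right side is non-positive; both must vanish, forcing $\mI^t \mx = 0$ and $\mx \in \ker(C(T_i) - I)$. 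Hence the $1$-eigenspace of $B_i$ is exactly the $1$-eigenspace of $C(T_i)$ intersected with $\mI^\perp$; by the inductive description of the latter, this intersection removes precisely one dimension exactly when the root of $T_i$ has a leaf child, because only the second-type vector in $T_i$'s basis fails to lie in $\mI^\perp$. A final bookkeeping, combining these non-trivial branch contributions with the single-leaf branches, yields both the formula $L(T) - P(T)$ overall and the fact that the assembled eigenvectors form the basis claimed for $T$: the single-leaf branch vectors $e_{c_i}$ span the same subspace as the first-type vectors with parent equal to the root of $T$ together with one second-type vector.
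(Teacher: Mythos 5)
Your proposal is correct and follows essentially the same route as the paper: induction on the number of vertices via the block decomposition \eqref{eq:block_diag}, with the $1$-eigenspace of each block $C(T_i)+E_i$ identified as the intersection of the $1$-eigenspace of $C(T_i)$ with $\mI^\perp$, so that exactly the second-type basis vector is lost when the branch root has a leaf child. The only cosmetic differences are that you phrase the lower bound as positive semidefiniteness of $C(T)-I$ and reach the intersection statement through the quadratic-form identity $\mx^t(C(T_i)-I)\mx=-(\mI^t\mx)^2$, where the paper uses the equality case of the Rayleigh-quotient chain, and that you additionally verify the claimed eigenvectors directly rather than only deriving them inductively.
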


\begin{proof}
We prove the statement by induction on the number of vertices. The statement is vacuously true if $T$ has only one vertex (the root), so we consider the situation that $T$ has one or more branches, denoted by $T_1,T_2,\ldots,T_k$. It follows from the block diagonal representation of $C(T)$ in~\eqref{eq:block_diag} that the spectrum of $C(T)$ is the union of the spectra of $C(T_1) + E_1$, $C(T_2) + E_2$, etc. We consider two cases for a branch $T_j$:
\begin{itemize}
\item If $T_j$ only consists of one vertex, then $C(T_j) + E_j$ is a $1 \times 1$-matrix whose only entry is $1$. This yields an eigenvalue $1$.
\item If $T_j$ has more than one vertex, then we already know that
$$\inf_{\mx \neq \mathbf{0}} R(C(T_j),\mx) \geq 1$$
by the induction hypothesis. Moreover, since $E_j$ is positive semidefinite, we have $R(E_j,\mx) \geq 0$ with equality if and only if $\mx$ is orthogonal to the all-$1$ vector $\mI$, or equivalently if the sum of all entries of $\mx$ is $0$. Thus
\begin{equation}\label{eq:induction_step}
\inf_{\mx \neq \mathbf{0}} R(C(T_j) + E_j,\mx) = \inf_{\mx \neq \mathbf{0}} \big( R(C(T_j),\mx) + R(E_j,\mx) \big) \geq \inf_{\mx \neq \mathbf{0}} R(C(T_j),\mx) \geq 1
\end{equation}
by the induction hypothesis.
\end{itemize}
It follows that every eigenvalue is greater than or equal to $1$, which proves the first assertion. Now let us look at the associated eigenvectors: for each single-vertex branch, we have a unit eigenvector whose only non-zero entry corresponds to the single vertex of the branch. If $u_1,u_2,\ldots,u_s$ are (without loss of generality) all leaves adjacent to the root, then each of them gives rise to such a unit eigenvector, and these $s$ eigenvectors are clearly linearly independent. However, we can replace them by a different set of vectors that spans the same space: the unit vector corresponding to $u_1$, and for every $j > 1$ the vector with an entry $1$ corresponding to $u_1$, an entry $-1$ corresponding to $u_j$, and otherwise zeros. This agrees with our description of eigenvectors.

For each branch $T_j$ that is not a single vertex, we need eigenvectors that satisfy~\eqref{eq:induction_step} with equality. For this purpose, $\mx$ needs to be an eigenvector of $C(T_j)$ with respect to the eigenvalue $1$, and $\mx$ needs to be orthogonal to the all-$1$ vector $\mI$. By the induction hypothesis, we have a basis for the eigenspace of $1$ as an eigenvalue of $C(T_j)$, and it is clear that those eigenvectors with an entry $1$ and an entry $-1$ form a basis of the subspace that is orthogonal to the all-$1$ vector $\mI$. Thus these remain eigenvectors for $C(T)$ (when suitably padded with zeros) and form a basis for the eigenspace of $1$ as an eigenvalue of $C(T_j) + E_j$. Each maximal $r$-tuple of leaves with a common parent vertex thus contributes $r-1$ to the multiplicity of $1$ as an eigenvalue, unless the common parent is the root, in which case the contribution is $r$. The formula for the multiplicity follows immediately.
\end{proof}

So we know now in particular that the eigenvalues of $C(T)$ are not only real and non-negative, but even positive, unless $T$ only has a single vertex. Next we look at the maximum eigenvalue of $C(T)$, i.e.~the spectral radius, which we denote by $\rho_C(T)$ and call the ancestral spectral radius. Making use of the block diagonal shape once again, we see that the spectral radius is the maximum of the spectral radii of the matrices $C(T_1) + E_1$, $C(T_2) + E_2$, etc. By the Perron-Frobenius Theorem, the multiplicity of $\rho_C(T)$ as an eigenvalue of $C(T_j) + E_j$ is at most $1$, and if $\rho_C(T)$ is an eigenvalue of $C(T_j) + E_j$, then there exists an eigenvector with positive entries for it. Hence the multiplicity of $\rho_C(T)$ is less than or equal to the root degree/number of root branches (equality can hold, e.g. if all root branches are isomorphic). In the following, we will call any eigenvector associated with $\rho_C(T)$ that has non-negative real entries a Perron vector of $T$.

The following proposition is analogous to the well-known fact that the spectral radius of the adjacency matrix of a graph lies between the average and the maximum degree (see for example~\cite[(1.5)]{Stevanovic2014spectral}).

\begin{prop}\label{prop:total_ancestral_bounds}
Let the total ancestral depth of a leaf $v$ in $T$ be defined by
$$\ad(v) = \sum_{w \in \CL(T)} \ell(v \vee w),$$
where the sum is over all leaves $w$. For every rooted tree $T$, we have
$$\frac{1}{L(T)} \sum_{v \in \CL(T)} \ad(v) \leq \rho_C(T) \leq \max_{v \in \CL(T)} \ad(v).$$
\end{prop}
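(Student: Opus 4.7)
The plan is to handle the two inequalities separately using the Rayleigh-quotient characterisation from \eqref{eq:rayleigh_max} for the lower bound, and the Perron--Frobenius structure already discussed for the upper bound.

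For the lower bound, I would plug the all-ones vector $\mI$ into the Rayleigh quotient. Since the entry $c_{ij}$ of $C(T)$ equals $\ell(v_i \vee v_j)$, we have
\[
\mI^t C(T) \mI \;=\; \sum_{i,j} c_{ij} \;=\; \sum_{v \in \CL(T)} \sum_{w \in \CL(T)} \ell(v \vee w) \;=\; \sum_{v \in \CL(T)} \ad(v),
\]
while $\mI^t \mI = L(T)$. By~\eqref{eq:rayleigh_max}, $\rho_C(T) \geq R(C(T),\mI)$, which immediately gives the desired lower bound.

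For the upper bound, I would exploit the block decomposition \eqref{eq:block_diag}: the spectral radius $\rho_C(T)$ is attained on some block $C(T_j)+E_j$, and this matrix has all entries $\geq 1$, hence is positive and in particular irreducible. By Perron--Frobenius there is a strictly positive eigenvector $\f$ (on the leaves of $T_j$) for $\rho_C(T)$; extend it by zeros on the leaves of the other branches to a vector $\mx$ on all of $\CL(T)$. Picking a leaf $v_k$ of $T_j$ at which $\mx$ attains its maximum entry $x_k>0$, the eigenvalue equation $C(T)\mx = \rho_C(T)\mx$ gives
\[
\rho_C(T)\, x_k \;=\; \sum_{i} c_{ki} x_i \;\leq\; x_k \sum_{i} c_{ki} \;=\; x_k \sum_{v_i \in \CL(T)} \ell(v_k \vee v_i) \;=\; x_k \ad(v_k),
\]
where we used that $x_i = 0$ for leaves $v_i$ outside $T_j$ and that $c_{ki}=0$ for such leaves as well. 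Dividing by $x_k$ gives $\rho_C(T) \leq \ad(v_k) \leq \max_{v \in \CL(T)} \ad(v)$.

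There is no real obstacle here; both halves are standard arguments. The one subtlety to be careful about is that a Perron vector for $C(T)$ itself need not be strictly positive (the multiplicity discussion after the theorem notes that eigenvectors live on individual branches), which is why I would reduce to a single block $C(T_j)+E_j$ before invoking Perron--Frobenius. Alternatively, the upper bound can be obtained even more directly by observing that $\ad(v_k)$ is precisely the $k$-th row sum of $C(T)$, so the claim reduces to the classical fact that the spectral radius of a symmetric non-negative matrix is bounded above by its maximum row sum.
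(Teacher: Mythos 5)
Your proof is correct and follows essentially the same route as the paper: the all-ones Rayleigh quotient for the lower bound, and the eigenvalue equation evaluated at a maximum entry of a non-negative eigenvector (equivalently, the maximum row sum) for the upper bound. The detour through the block decomposition and Perron--Frobenius is not needed for the upper bound, since a non-zero non-negative eigenvector automatically has a strictly positive maximum entry, which is all the division step requires --- this is exactly how the paper argues.
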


\begin{proof}
Note that $\ad(v)$ is precisely the row sum of the row that corresponds to $v$. For the lower bound, we consider the Rayleigh quotient of the vector $\mI$. Since $\mI^t \mI = L(T)$ and $\mI^t C(T) \mI$ is the sum of all entries of $C(T)$, we have
$$\rho_C(T) \geq R(C(T),\mI) = \frac{1}{L(T)} \sum_{v \in \CL(T)} \ad(v).$$
For the upper bound, consider an eigenvector $\mx$ associated with $\rho_C(T)$, and denote the entry of $\mx$ associated with vertex $v$ by $x(v)$. Recall that the eigenvector can be chosen to have only non-negative entries. Let $w$ be the vertex for which $x(w)$ attains its maximum value. The eigenvalue equation gives us
$$\rho_C(T) x(w) = \sum_v \ell(v \vee w) x(v) \leq \sum_v \ell(v \vee w) x(w) = \ad(w) x(w),$$
thus
$$\rho_C(T) \leq \ad(w),$$
from which the upper bound follows immediately.
\end{proof}

By means of the identity~\eqref{eq:dist_level}, $\ad(v)$ can be rewritten in terms of distances. Specifically, we have
\begin{align*}
\ad(v) &= \frac12 \sum_{w \in \CL(T)} (d(v,r) + d(w,r) - d(v,w)) \\
&= \frac12 \big( L(T) d(v,r) + D_T(r) - D_T(v) \big),
\end{align*}
where $D_T(v)$ is the sum of all distances from $v$ to the leaves of $T$. Summing over all leaves $v$, we get
$$\sum_{v \in \CL(T)} \ad(v) = \frac12 \Big( 2L(T) D_T(r) - \sum_{v \in \CL(T)} D_T(v) \Big) = L(T) D_T(r) - \TW(T),$$
where $\TW(T)$ represents the terminal Wiener index, i.e.~the sum of all distances between pairs of leaves:
$$\TW(T) = \sum_{\{v,w\} \subseteq \CL(T)} d(v,w).$$
Hence the lower bound in Proposition~\ref{prop:total_ancestral_bounds} becomes
\begin{equation}\label{eq:lower_bound_tw}
D_T(r) - \frac{\TW(T)}{L(T)} \leq \rho_C(T).
\end{equation}
Another simple lower bound for the spectral radius $\rho_C(T)$ is given by the height $h(T)$, i.e.~the greatest distance of a leaf from the root.

\begin{prop}\label{prop:height}
For every rooted tree $T$, we have $\rho_C(T) \geq h(T)$.
\end{prop}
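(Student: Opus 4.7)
The plan is to use the Rayleigh quotient characterisation of the largest eigenvalue from~\eqref{eq:rayleigh_max} together with a single well-chosen test vector. Specifically, let $v$ be a leaf whose distance from the root $r$ attains the maximum $h(T)$; such a leaf exists by definition of the height. Since $v \vee v = v$ and the level of $v$ is precisely $d(v,r) = h(T)$, the diagonal entry of $C(T)$ corresponding to $v$ equals $h(T)$.

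Next, I would take $\mx = \mathbf{e}_v$, the standard unit vector with a $1$ in the row/column indexed by $v$ and zeros elsewhere. Then $\mx^t \mx = 1$ and $\mx^t C(T) \mx$ isolates the diagonal entry $c_{vv} = h(T)$. Substituting into~\eqref{eq:rayleigh_max} gives
$$\rho_C(T) = \sup_{\|\mx\| = 1} \mx^t C(T) \mx \geq \mathbf{e}_v^t C(T) \mathbf{e}_v = h(T),$$
which is the desired inequality.

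There is no real obstacle here: the bound follows by testing the Rayleigh quotient against a single coordinate vector, exploiting the observation made in the introduction that the diagonal entries of $C(T)$ are always the row maxima and, in particular, record the depth of the corresponding leaf. The only thing to keep in mind is that the height $h(T)$ is attained by some leaf (rather than by an internal vertex), which is immediate from the definition adopted in the paper.
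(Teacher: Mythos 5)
Your proof is correct and is essentially identical to the paper's: both take the unit coordinate vector supported on a deepest leaf $v$, observe that the resulting Rayleigh quotient equals the diagonal entry $c_{vv} = \ell(v) = h(T)$, and conclude from~\eqref{eq:rayleigh_max}. Nothing further is needed.
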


\begin{proof}
Let $v$ be a leaf whose distance to the root equals $h(T)$, and take $\mx$ to be the unit vector with one entry $1$ corresponding to $v$ and otherwise only zeros. It is easy to see that $R(C(T),\mx) = h(T)$, so the statement follows immediately from~\eqref{eq:rayleigh_max}.
\end{proof}

The inequality in Proposition~\ref{prop:height} is actually sharp for every value of $h(T)$ and every value of $L(T)$: to see this, consider a tree consisting of the root, $n-1$ leaves attached to the root, and a path of length $h$ attached to the root (at one of its ends).

For the star $S_n$ (consisting only of a root and $n$ leaves attached to it), we have $\rho_C(S_n) = 1$ for every $n$. Thus the trivial bound $\rho_C(T) \geq 1$ is in fact sharp for all possible sizes of $T$. However, the lower bound can be improved if the degrees are restricted, as is shown in the following theorem:

\begin{figure}[htbp]\centering  
\begin{tikzpicture}[thick, level distance=12mm, 
  every node/.style = {shape=circle, fill, inner sep=2pt}]
\tikzstyle{level 1}=[sibling distance=30mm]
\tikzstyle{level 2}=[sibling distance=10mm]
\node {}
child {node {}
child {node {}} 
child {node {}}
child {node {}}}
child {node {}
child {node {}} 
child {node {}}
child {node {}}}
child {node {}
child {node {}} 
child {node {}}
child {node {}}}
;

\end{tikzpicture}
\caption{A complete ternary tree.}\label{fig:complete_ternary}
\end{figure}
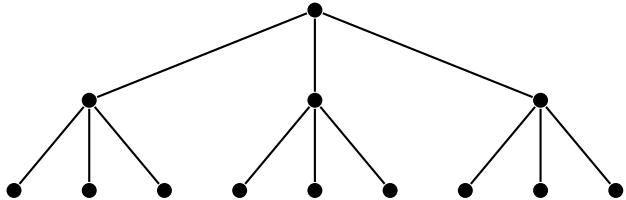

\begin{thm}
Let $T$ be a rooted tree for which the outdegree (number of children) of all vertices is less than or equal to $\Delta$. Then we have
$$\rho_C(T) \geq \frac{L(T)-1}{\Delta-1}.$$
Equality holds if and only if $T$ is a complete $\Delta$-ary tree, i.e.~a rooted tree for which all leaves lie on the same level and all internal vertices have precisely $\Delta$ children; see Figure~\ref{fig:complete_ternary} for an example in the case $\Delta = 3$.
\end{thm}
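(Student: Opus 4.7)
The plan is to apply the Rayleigh-quotient lower bound from Proposition~\ref{prop:total_ancestral_bounds} and reduce to a combinatorial inequality on the row sums, which I prove by induction on $|V(T)|$. Set $S(T) := \sum_{v \in \CL(T)} \ad(v) = \sum_{v,w \in \CL(T)} \ell(v \vee w)$. Proposition~\ref{prop:total_ancestral_bounds} yields $\rho_C(T) \geq S(T)/L(T)$, so it suffices to establish
$$S(T) \geq \frac{L(T)(L(T)-1)}{\Delta - 1}.$$

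The key identity comes from the branch decomposition underlying~\eqref{eq:block_diag}. If $T_1,\ldots,T_k$ are the branches of $T$ (with $k \leq \Delta$), then leaves in distinct branches contribute $0$ to $S(T)$, while leaves in the same branch $T_i$ have ancestral level in $T$ equal to one more than in $T_i$; hence
$$S(T) = \sum_{i=1}^k S(T_i) + \sum_{i=1}^k L(T_i)^2.$$
Writing $\ell_i := L(T_i)$ and applying the inductive hypothesis $S(T_i) \geq \ell_i(\ell_i - 1)/(\Delta - 1)$ to each branch, the required bound reduces to $\Delta \sum_i \ell_i^2 \geq \bigl(\sum_i \ell_i\bigr)^2$, which follows from Cauchy--Schwarz combined with $k \leq \Delta$, since $\bigl(\sum_i \ell_i\bigr)^2 \leq k \sum_i \ell_i^2 \leq \Delta \sum_i \ell_i^2$. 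The base case $T=\{r\}$ is immediate, as both sides vanish.

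For the equality characterization, first suppose $T$ is a complete $\Delta$-ary tree. Then by induction on its height, the derivation above holds with equality at every step, so $S(T) = L(T)(L(T)-1)/(\Delta-1)$. Leaf-transitivity of the automorphism group of $T$ makes $\ad(v)$ constant in $v$, so $\ad(v) = (L(T)-1)/(\Delta-1)$ for every leaf, and the upper bound $\rho_C(T) \leq \max_v \ad(v)$ from Proposition~\ref{prop:total_ancestral_bounds} pins $\rho_C(T)$ to this value. Conversely, if $\rho_C(T) = (L(T)-1)/(\Delta-1)$, then the chain $\rho_C(T) \geq S(T)/L(T) \geq (L(T)-1)/(\Delta-1)$ collapses and forces $S(T) = L(T)(L(T)-1)/(\Delta-1)$. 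Tightness in Cauchy--Schwarz forces all $\ell_i$ equal, tightness of $k \sum_i \ell_i^2 \leq \Delta \sum_i \ell_i^2$ forces $k=\Delta$, and tightness in each inductive hypothesis gives that every $T_i$ is complete $\Delta$-ary; equal leaf counts then force equal heights, so $T$ itself is complete $\Delta$-ary. The main conceptual step is recognizing that $S(T)$ satisfies the clean recursion above and that the bound then collapses to a one-line Cauchy--Schwarz; once this is in place, both the main inequality and the equality analysis are routine.
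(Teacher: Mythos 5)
Your proof is correct and follows essentially the same route as the paper: the lower bound of Proposition~\ref{prop:total_ancestral_bounds}, the branch recursion $S(T)=\sum_i S(T_i)+\sum_i L(T_i)^2$, induction, and the quadratic-mean/Cauchy--Schwarz step with $k\le\Delta$, together with the same tightness analysis for the equality case. If anything, you are slightly more careful than the paper in the forward direction of the equality characterization, where you use the upper bound $\rho_C(T)\le\max_v\ad(v)$ and leaf-transitivity to pin down $\rho_C$ of the complete $\Delta$-ary tree exactly --- a detail the paper leaves to the reader.
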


\begin{proof}
We make use of the lower bound of Proposition~\ref{prop:total_ancestral_bounds}. It will be shown that
\begin{equation}\label{eq:Q_inv_lower}
\sum_{v \in \CL(T)} \ad(v) \geq \frac{L(T)(L(T)-1)}{\Delta-1},
\end{equation}
from which the stated inequality follows. Let us use the shorthand 
$$Q(T) = \sum_{v \in \CL(T)} \ad(v) = \sum_{v \in \CL(T)} \sum_{w \in \CL(T)} \ell(v \vee w).$$
Our first goal is a recursion for this quantity in terms of the branches of $T$. Let these branches be denoted by $T_1,T_2,\ldots,T_k$. For two leaves $v,w$ in distinct branches, we simply have $\ell(v \vee w) = 0$. For two leaves $v,w$ in the same branch $T_i$, $\ell(v \vee w)$ increases by $1$ in $T$ compared to $T_i$. Thus we have
\begin{align*}
Q(T) &= \sum_{i=1}^k \Big( Q(T_i) + \sum_{v \in \CL(T_i)} \sum_{w \in \CL(T_i)} 1 \Big) \\
&= \sum_{i=1}^k \big( Q(T_i) + L(T_i)^2 \big).
\end{align*}
Now we prove~\eqref{eq:Q_inv_lower} by induction on the number of vertices of $T$. If there is only one vertex, then both sides of the inequality are $0$, so it holds. Otherwise, there are one or more branches $T_1,T_2,\ldots,T_k$ (where $k \leq \Delta$). The recursion for $Q(T)$, combined with the induction hypothesis, gives us
\begin{align*}
Q(T) &= \sum_{i=1}^k \big( Q(T_i) + L(T_i)^2 \big) \\
&\geq \sum_{i=1}^k  \Big( \frac{L(T_i)(L(T_i)-1)}{\Delta-1} + L(T_i)^2 \Big) \\
&= \frac{\Delta}{\Delta-1} \sum_{i=1}^k L(T_i)^2 - \frac{1}{\Delta-1} \sum_{i=1}^k L(T_i).
\end{align*}
We have $\sum_{i=1}^k L(T_i) = L(T)$, so the final term simplifies to $\frac{L(T)}{\Delta-1}$. Moreover, the inequality between the quadratic and the arithmetic mean gives us
$$\sum_{i=1}^k L(T_i)^2 \geq \frac{1}{k} \Big( \sum_{i=1}^k L(T_i) \Big)^2 = \frac{L(T)^2}{k} \geq \frac{L(T)^2}{\Delta}.$$
Putting everything together, we obtain
$$Q(T) \geq \frac{\Delta}{\Delta-1} \cdot \frac{L(T)^2}{\Delta} - \frac{1}{\Delta-1} \cdot L(T) = \frac{L(T)(L(T)-1)}{\Delta-1},$$
which completes the induction. Note that equality holds if and only if $k = \Delta$, $L(T_1)=L(T_2) = \cdots = L(T_k)$ and equality holds for each of the branches $T_i$. It is easy to deduce that equality holds if and only if $T$ is a complete $\Delta$-ary tree, as stated.
\end{proof}

We remark that the parameter $Q$ is somewhat similar in its definition to the Wiener index (sum of distances between all pairs of vertices), which is known to be minimised by complete $\Delta$-ary trees as well, see \cite{Fischermann2002Wiener}.

Moving our attention to upper bounds, we focus on classes of trees with fixed parameters such as outdegree sequence, number of vertices and number of leaves. The outdegree of a vertex is the number of children, and the outdegree sequence is the sequence of outdegrees of all vertices in a rooted tree. As it turns out, the maximum of the ancestral spectral radius $\rho_C(T)$ is typically attained by a so-called caterpillar tree. A rooted caterpillar $T$ is a rooted tree with the property that removing all of its leaves yields a path with the root at one end. The resulting path is called the backbone or spine of the caterpillar $T$.

Before we state our results, we first introduce some useful lemmas. The main idea is to apply certain tree operations that affect the ancestral spectral radius while preserving some of the features of the tree (such as the outdegree sequence). 

Firstly, we introduce an operation that moves branches away from the root along a path while preserving the number of leaves. In this way, we increase the levels of the common ancestors of some of the leaves. Formally, this operation can be described as follows: let $v_1,v_2,\ldots,v_k$ be consecutive vertices (in this order) on a path from the root to a leaf of a rooted tree $T$, and assume that $v_k$ is not a leaf. Moreover, let $B$ be a branch attached to $v_1$ (a subtree consisting of a child of $v_1$ and all its descendants). We construct a tree $T^\prime$ by moving $B$ to $v_k$ (by removing the edge between $B$'s root and $v_1$ and replacing it with an edge to $v_k$). This is illustrated in Figure \ref{fig:BSO}. We shall call this operation the branch shift operation.  It turns out that this operation increases the ancestral spectral radius, which is captured in the following lemma.

\begin{figure}[htbp]
\begin{subfigure}[b]{0.4\textwidth}
\centering  
\begin{tikzpicture}[thick, level distance=10mm,scale=0.6]
\foreach \i in {2,4,6,8,10}{\draw (0,\i) node[shape=circle, fill, inner sep=2pt]{};}
\draw (0,10)--(0,8) (0,6)--(0,4);
\draw[dashed] (0,8)--(0,6) (0,4)--(0,2);
\draw(0,10)--(2,10.5)--(2,9.5)--(0,10)(0,8)--(2,8.5)--(2,7.5)--(0,8)(0,6)--(2,6.5)--(2,5.5)--(0,6)(0,6)--(-2,6)--(-4,6.5)--(-4,5.5)--(-2,6)(0,4)--(2,4.5)--(2,3.5)--(0,4)(0,2)--(-1,0)--(1,0)--(0,2) ;
\draw (0,10.5)node{$r$}(-3.5,6)node{$B$}(0.3,6.4)node{$v_1$}(1.5,6)node{$A_1$}(1.5,4)node{$A_2$}(-0.5,4)node{$v_2$}(0,0.9)node{$A_k$}(0.5,2)node{$v_k$};
\end{tikzpicture}
\caption{The rooted tree $T$.}
\end{subfigure}
\begin{subfigure}[b]{0.4\textwidth}
\centering
\begin{tikzpicture}[thick, level distance=10mm, scale=0.6]
\foreach \i in {2,4,6,8,10}{\draw (0,\i) node[shape=circle, fill, inner sep=2pt]{};}
\draw (0,10)--(0,8) (0,6)--(0,4);
\draw[dashed] (0,8)--(0,6) (0,4)--(0,2);
\draw (0,10)--(2,10.5)--(2,9.5)--(0,10)(0,8)--(2,8.5)--(2,7.5)--(0,8)(0,6)--(2,6.5)--(2,5.5)--(0,6)(0,2)--(-2,2)--(-4,2.5)--(-4,1.5)--(-2,2)(0,4)--(2,4.5)--(2,3.5)--(0,4)(0,2)--(-1,0)--(1,0)--(0,2) ;
\draw (0,10.5)node{$r$}(-3.5,2)node{$B$}(-0.5,6)node{$v_1$}(1.5,6)node{$A_1$}(1.5,4)node{$A_2$}(-0.5,4)node{$v_2$}(0,0.9)node{$A_k$}(0.5,2)node{$v_k$};
\end{tikzpicture}
\caption{The rooted tree $T^\prime$.}
\end{subfigure}
\caption{$T^\prime$ is obtained from $T$ by the branch shift operation.}\label{fig:BSO}
\end{figure}
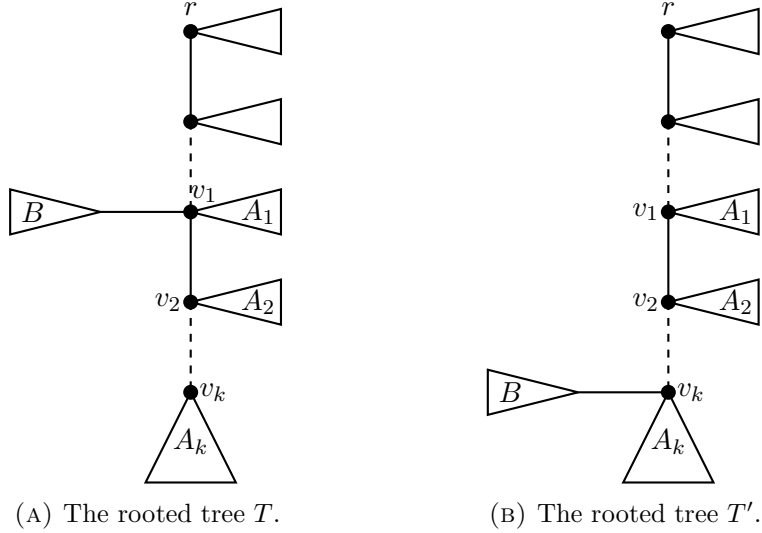

\begin{lem}\label{Lem:Branch_shifting}
Suppose that $T^\prime$ is obtained from a rooted tree $T$ by the branch shift operation as described above and depicted in Figure \ref{fig:BSO}. Assume moreover that there is a Perron vector for $T$ with the property that the entries corresponding to leaf descendants of $v_k$ (all leaves for which the path to the root passes through $v_k$) are positive. Then $\rho_C (T^\prime) > \rho_C  (T)$.
\end{lem}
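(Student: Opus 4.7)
The plan is to apply the Rayleigh quotient principle. Let $\mx$ be a Perron vector of $T$ satisfying the hypothesis; I will show that $\mx^t C(T') \mx > \mx^t C(T) \mx$. Since the leaf sets of $T$ and $T'$ are the same and $\mx^t \mx$ is thus unchanged, this will yield $R(C(T'), \mx) > R(C(T), \mx) = \rho_C(T)$, and hence $\rho_C(T') \geq R(C(T'), \mx) > \rho_C(T)$ via \eqref{eq:rayleigh_max}. We may assume $k \geq 2$ throughout, as $T' = T$ when $k = 1$.

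The first substantive step is to compare ancestral-matrix entries. If neither $a$ nor $b$ lies in $B$, the LCA of $a$ and $b$ is unaffected, so $C(T')_{ab} = C(T)_{ab}$. If both $a, b \in \CL(B)$, their LCA sits inside $B$; since $B$ as a whole shifts down by exactly $k-1$ levels when reattached at $v_k$, we get $C(T')_{ab} = C(T)_{ab} + (k-1)$. Finally, for $a \in \CL(B)$ and $b \notin \CL(B)$, let $i(b) \in \{1, \ldots, k\}$ be the largest index $i$ such that $b$ is a descendant of $v_i$ (or set $i(b) = 0$ if no such $i$ exists); then the LCA of $a$ and $b$ moves from $v_1$ (in $T$) to $v_{i(b)}$ (in $T'$) when $i(b) \geq 1$, and is otherwise unchanged, so the entry grows by $\max(i(b) - 1, 0) \geq 0$. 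In particular, the difference equals the positive number $k - 1$ whenever $a \in \CL(B)$ and $b$ is a leaf descendant of $v_k$.

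The second step exploits the hypothesis to secure the positivity of $x_a$ on $\CL(B)$ itself. Since the entire operation takes place inside a single root branch $T_{j_0}$ of $T$ (the one containing $v_1$, and hence also $B$ together with every leaf descendant of $v_k$), the restriction of $\mx$ to the leaves of $T_{j_0}$ is an eigenvector of $C(T_{j_0}) + E_{j_0}$ with eigenvalue $\rho_C(T)$ by \eqref{eq:block_diag}. The matrix $C(T_{j_0}) + E_{j_0}$ has strictly positive entries (as $E_{j_0}$ is all-ones and $C(T_{j_0})$ is non-negative), so Perron--Frobenius forces this restricted eigenvector to be strictly positive on every leaf of $T_{j_0}$, and therefore on every leaf of $B$.

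Combining the two ingredients,
\[
\mx^t \bigl(C(T') - C(T)\bigr) \mx = \sum_{a, b} x_a x_b \bigl(C(T')_{ab} - C(T)_{ab}\bigr)
\]
is a sum of non-negative terms in which the pairs with $a \in \CL(B)$ and $b$ a leaf descendant of $v_k$ each contribute the strictly positive quantity $(k-1) x_a x_b$. The sum is therefore strictly positive, which is what we needed. The only step that really requires care is the case analysis of entry differences; once that is in hand, the Rayleigh-quotient comparison is routine.
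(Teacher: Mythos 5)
Your entrywise comparison of $C(T)$ and $C(T')$ and the resulting Rayleigh-quotient computation match the paper's own argument, and your proof is correct whenever $v_1$ is not the root. The gap is in your second step: you assert that $v_1$, $B$ and the leaf descendants of $v_k$ all lie in a single root branch $T_{j_0}$ of $T$, and then invoke Perron--Frobenius on the positive block $C(T_{j_0})+E_{j_0}$ to conclude that $\mx$ is strictly positive on $\CL(B)$. This fails when $v_1=r$: the setup (``consecutive vertices on a path from the root to a leaf'') allows this, and it is in fact needed in the applications (for instance, the proof of Theorem~\ref{Thm:rootedBroom} repeatedly shifts branches hanging off the root down the backbone). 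When $v_1=r$, the branch $B$ is its own root branch of $T$, the block-diagonal form~\eqref{eq:block_diag} decouples it from the branch containing $A_k$, and a Perron vector can vanish identically on $\CL(B)$: take $B$ to be a single leaf attached to the root of a long caterpillar, so that the $1\times 1$ block for $B$ has eigenvalue $1<\rho_C(T)$ and every Perron vector is zero there. In that case every strictly positive term in your final sum disappears and your argument only yields $\rho_C(T')\ge\rho_C(T)$.

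The paper closes exactly this case by arguing in $T'$ rather than in $T$: if equality held throughout $\rho_C(T')\ge R(C(T'),\f)\ge R(C(T),\f)=\rho_C(T)$, then on the one hand $R(C(T'),\f)=R(C(T),\f)$ forces $f(u)=0$ for all $u\in\CL(B)$, because the difference of the two quadratic forms is at least $(k-1)\bigl(\sum_{u\in\CL(B)}f(u)\bigr)^2$; while on the other hand $R(C(T'),\f)=\rho_C(T')$ forces $\f$ to be an eigenvector of $C(T')$ for the eigenvalue $\rho_C(T')$. In $T'$ the branch $B$ has been reattached at $v_k$, so $\CL(B)$ and the leaf descendants of $v_k$ \emph{do} lie in the same root branch of $T'$; your own positivity argument, applied to that block of $C(T')$, then shows $\f$ would have to be strictly positive on $\CL(B)$ --- a contradiction. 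Adding this equality-case analysis (or, equivalently, restricting your claim about $T_{j_0}$ to the case $v_1\ne r$ and treating $v_1=r$ separately as above) completes the proof.
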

\begin{proof}
As indicated in the figure, we denote the subtree consisting of $v_k$ and all its descendants by $A_k$, and we denote the subtrees consisting of $v_1,v_2,\ldots,v_{k-1}$ and all their respective descendants not lying on the path $v_1,v_2,\ldots,v_k$, the subtree $A_k$ or the branch $B$ by $A_1,A_2,\ldots,A_{k-1}$.

Let $\f$ be a unit Perron vector whose entries corresponding to the leaves in $A_k$ are positive; such a vector exists by assumption. We write $f(u)$ for the entry of $\f$ corresponding to a leaf $u$ in $T$. In the following, we will use $\ell_T$ to indicate the level of a vertex in $T$ (to emphasize the dependence on the tree).

The main idea of the proof of this Lemma is to show that the difference between the Rayleigh quotients $R(C(T^\prime),\f)$ and $R(C(T),\f)$ defined on the  Perron vector $\f$ of the rooted tree $T$ is strictly positive.

Note that $\ell_{T^\prime}(x\vee y)- \ell_{T}(x\vee y) = 0$ for all pairs of leaves $x,y$ that do not belong to $B$. Hence we can ignore all such pairs. The same is true  if $x$ lies in $B$, but $y$ does not lie in $\cup_{i=2}^k A_i$, or vice versa.

If $x$ lies in $B$ and $y$ in $A_i$ for some $i$, or vice versa, then we have
\[\ell_{T^\prime}(x\vee y) = (i-1) + \ell_{T}(x\vee y).\]
Finally, if $x$ and $y$ are both leaves in $B$, then
\begin{equation}\label{eq:both_in_B}
\ell_{T^\prime}(x\vee y) = (k-1) + \ell_{T}(x\vee y).
\end{equation}

Therefore, we can deduce that the entries of the matrix $C(T^\prime)$ are greater than or equal to those of the matrix $C(T)$, and it follows that
\begin{equation}\label{eq:TvsT'}
\rho_C (T^\prime) \geq R(C(T^\prime),\f) \geq R(C(T),\f) = \rho_C (T).
\end{equation}
In view of~\eqref{eq:both_in_B}, we even have
$$R(C(T^\prime),\f) \geq R(C(T),\f) + (k-1) \Big( \sum_{u \in B} f(u) \Big)^2,$$
so equality in~\eqref{eq:TvsT'} can only hold if $f(u) = 0$ for all $u \in B$. Moreover, for equality to hold, $\f$ would also have to be a Perron vector of $T^{\prime}$. But since $\f$ is non-zero on $A_k$ by assumption, it has to be non-zero on all leaves that belong to the same root branch of $T^{\prime}$ as $A_k$, in particular the leaves that belong to $B$. Thus we must have strict inequality. In fact, this argument even shows that $f(u) = 0$ is only possible in $T$ for leaves $u  \in B$ if $v_1$ is the root. Hence the statement of the lemma holds.
\end{proof}

Another important tree operation that increases the ancestral spectral radius of a rooted tree is the star shift operation. This operation increases the number of internal vertices by 1, but preserves the number of leaves in the tree. Let $v_1$ be a vertex of a rooted tree $T$ all of whose (at least two) children are leaves, and let $u$ be one of these leaves. The star shift operation introduces a new vertex $v_2$, which becomes a child of $v_1$. The leaf $u$ remains a child of $v_1$, while all other children of $v_1$ become children of $v_2$. The result is a tree $T^\prime$, see Figure \ref{fig:SSO} for an illustration.

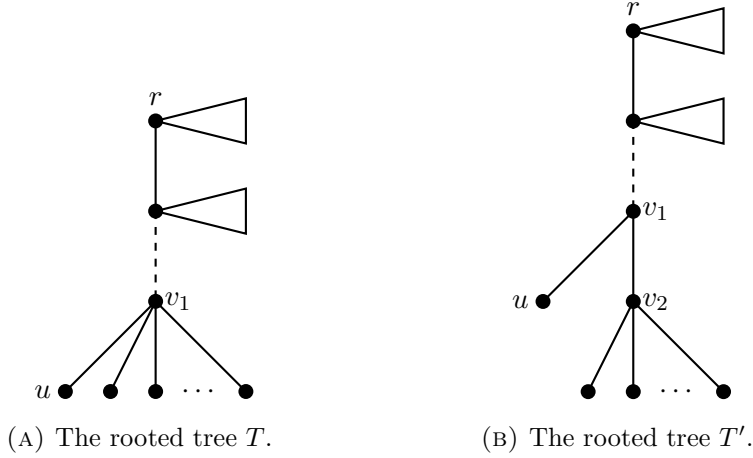
\begin{figure}[htbp]
\begin{subfigure}[b]{0.4\textwidth}
\centering  
\begin{tikzpicture}[thick, level distance=10mm,scale=0.6]
\foreach \i in {6,8,10}{\draw (0,\i) node[shape=circle, fill, inner sep=2pt]{};}
\foreach \i in {-2,-1,0,2}{\draw (\i,4) node[shape=circle, fill, inner sep=2pt]{};}
\draw (0,10)--(0,8)(0,6)--(0,4)(0,6)--(-2,4)(0,6)--(-1,4)(0,6)--(2,4);
\draw[dashed] (0,8)--(0,6);
\draw (0,10)--(2,10.5)--(2,9.5)--(0,10)(0,8)--(2,8.5)--(2,7.5)--(0,8);
\draw (0,10.5)node{$r$}(0.5,6)node{$v_1$}(1,4)node{$\cdots$}(-2.5,4)node{$u$};
\end{tikzpicture}
\caption{The rooted tree $T$.}
\end{subfigure}
\begin{subfigure}[b]{0.4\textwidth}
\centering
\begin{tikzpicture}[thick, level distance=10mm, scale=0.6]
\foreach \i in {6,8,10}{\draw (0,\i) node[shape=circle, fill, inner sep=2pt]{};}
\foreach \i in {-1,0,2}{\draw (\i,2) node[shape=circle, fill, inner sep=2pt]{};}
\foreach \i in {-2,0}{\draw (\i,4) node[shape=circle, fill, inner sep=2pt]{};}
\draw (0,10)--(0,8)(0,6)--(0,4)--(0,2)(0,6)--(-2,4)(0,4)--(-1,2)(0,4)--(2,2);
\draw[dashed] (0,8)--(0,6);
\draw (0,10)--(2,10.5)--(2,9.5)--(0,10)(0,8)--(2,8.5)--(2,7.5)--(0,8);
\draw (0,10.5)node{$r$}(0.5,6)node{$v_1$}(0.5,4)node{$v_2$}(1,2)node{$\cdots$}(-2.5,4)node{$u$};
\end{tikzpicture}
\caption{The rooted tree $T^\prime$.}
\end{subfigure}
\caption{$T^\prime$ is obtained from $T$ by the star shift operation.}\label{fig:SSO}
\end{figure}

\begin{lem}\label{Lem:Star_shift}
Suppose that $T^\prime$ is obtained from a rooted tree $T$ by the star shift operation as described above and depicted in Figure \ref{fig:SSO}. Assume moreover that there is a Perron vector for $T$ with the property that the entries corresponding to children of $v_1$ are positive. Then $\rho_C (T^\prime) > \rho_C  (T)$.
\end{lem}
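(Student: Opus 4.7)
The plan is to mimic the Rayleigh quotient argument used in Lemma~\ref{Lem:Branch_shifting}: I want to show that $C(T^\prime) - C(T)$ is a positive semidefinite matrix (in fact rank one), and then evaluate the Rayleigh quotient at the given Perron vector $\f$ of $T$ to produce a value strictly larger than $\rho_C(T)$.

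First I would compute $C(T^\prime) - C(T)$ entry by entry. Write the children of $v_1$ in $T$ as $u, w_1, w_2, \ldots, w_s$ (so $s \geq 1$), set $W = \{w_1, \ldots, w_s\}$, and let $\ell = \ell_T(v_1)$. If a pair of leaves $(y,z)$ has both entries outside $W$, nothing changes: the paths to the root are unaffected by inserting $v_2$ strictly below $v_1$, so the common ancestor is the same vertex sitting at the same level in both trees. The same holds when one of the leaves is $u$ and the other is any $w_i$, since their common ancestor is $v_1$ before and after the operation. The only entries that actually grow are those indexed by pairs in $W \times W$: for $i \neq j$, the common ancestor of $w_i$ and $w_j$ moves from $v_1$ to $v_2$ (one level lower), and the diagonal depth of each $w_i$ itself increases from $\ell + 1$ to $\ell + 2$. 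Hence
$$C(T^\prime) - C(T) = \mathbf{1}_W \mathbf{1}_W^t,$$
where $\mathbf{1}_W$ denotes the characteristic vector of $W$ on the leaf set of $T$.

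Now let $\f$ be a unit Perron vector of $T$ satisfying the hypothesis of the lemma, so in particular $f(w_i) > 0$ for every $i$ (each $w_i$ being a child of $v_1$). Using $\f^t C(T) \f = \rho_C(T)$, we obtain
$$R(C(T^\prime), \f) = \rho_C(T) + (\mathbf{1}_W^t \f)^2 = \rho_C(T) + \Big( \sum_{i=1}^s f(w_i) \Big)^2,$$
which strictly exceeds $\rho_C(T)$ since every $f(w_i)$ is positive. Combined with~\eqref{eq:rayleigh_max} this yields $\rho_C(T^\prime) \geq R(C(T^\prime), \f) > \rho_C(T)$, as required.

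I do not anticipate a serious obstacle; the only delicate point is verifying that no pair outside $W \times W$ contributes to $C(T^\prime) - C(T)$, which rests on the simple observation that inserting $v_2$ below $v_1$ leaves the level of every vertex at or above $v_1$ untouched, so all common ancestors not strictly below $v_1$ retain their levels.
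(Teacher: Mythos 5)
Your proposal is correct and follows essentially the same route as the paper: both identify that the only entries of $C(T^\prime)-C(T)$ that change are those indexed by pairs of leaves that become children of $v_2$ (each increasing by exactly $1$, including the diagonal), and both then evaluate the Rayleigh quotient at the given Perron vector to get the strict increase. Your explicit rank-one formulation $C(T^\prime)-C(T)=\mathbf{1}_W\mathbf{1}_W^t$ is just a cleaner packaging of the same observation.
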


\begin{proof}
We let $\f$ be a unit Perron vector with the property that the entries corresponding to children of $v_1$ are positive. Observe that
$$\ell_{T^\prime}(x\vee y)- \ell_{T}(x\vee y) = 0$$
unless both $x$ and $y$ are children of $v_2$ in $T^\prime$. In the latter case, we have
$$\ell_{T^\prime}(x\vee y)- \ell_{T}(x\vee y) = 1$$
So as in the previous lemma, the entries of the matrix $C(T^\prime)$ are greater than or equal to those of the matrix $C(T)$. This together with the assumption that the entries of $\f$ corresponding to the children of $v_2$ are positive shows that
\[\rho_C (T^\prime) \geq R(C(T^\prime),\f) > R(C(T),\f) = \rho_C (T).\]
\end{proof}

Finally, we introduce the leaf swap operation (LSO). Similar to the branch shift operation, it moves branches away from the root. The setup is depicted in Figure \ref{fig:LSO}. An important feature of this operation is that it does not change the outdegree sequence. As in the setup of the branch shift operation, we let $v_1,v_2,\ldots,v_k$ be consecutive vertices (in this order) on a path from the root to a leaf of a tree $T$, and assume that $v_k$ is not a leaf. Moreover, $w_1$ and $w_2$ are children of $v_1$ and $v_k$ respectively such that $w_2$ is a leaf while $w_1$ is not. The leaf swap operation takes the subtree $B$ induced by $w_1$ and all its successors and swaps it with $w_2$ (equivalently, the edges $v_1w_1$ and $v_kw_2$ are removed and replaced by edges $v_1w_2$ and $v_kw_1$). This is illustrated in Figure \ref{fig:LSO}. 

\begin{figure}[htbp]
\begin{subfigure}[b]{0.4\textwidth}
\centering  
\begin{tikzpicture}[thick, level distance=10mm,scale=0.6]
\foreach \i in {2,4,6,8,10}{\draw (0,\i) node[shape=circle, fill, inner sep=2pt]{};}
\foreach \i in {2,6}{\draw (-2,\i) node[shape=circle, fill, inner sep=2pt]{};}
\draw (0,10)--(0,8) (0,6)--(0,4)(-2,6)--(0,6)(-2,2)--(0,2);
\draw[dashed] (0,8)--(0,6) (0,4)--(0,2);
\draw (0,10)--(2,10.5)--(2,9.5)--(0,10)(0,8)--(2,8.5)--(2,7.5)--(0,8)(0,6)--(2,6.5)--(2,5.5)--(0,6)(-2,6)--(-4,6.5)--(-4,5.5)--(-2,6)(0,4)--(2,4.5)--(2,3.5)--(0,4)(0,2)--(-1,0)--(1,0)--(0,2) ;
\draw (0,10.5)node{$r$}(-3.5,6)node{$B$}(-0.4,6.4)node{$v_1$}(1.5,6)node{$A_1$}(1.5,4)node{$A_2$}(-0.5,4)node{$v_2$}(0,0.9)node{$A_k$}(0.5,2)node{$v_k$}(-2,6.5)node{$w_1$}(-2,2.5)node{$w_2$};
\end{tikzpicture}
\caption{The rooted tree $T$.}
\end{subfigure}
\begin{subfigure}[b]{0.4\textwidth}
\centering
\begin{tikzpicture}[thick, level distance=10mm, scale=0.6]
\foreach \i in {2,4,6,8,10}{\draw (0,\i) node[shape=circle, fill, inner sep=2pt]{};}
\foreach \i in {2,6}{\draw (-2,\i) node[shape=circle, fill, inner sep=2pt]{};}
\draw (0,10)--(0,8) (0,6)--(0,4)(-2,6)--(0,6)(-2,2)--(0,2);
\draw[dashed] (0,8)--(0,6) (0,4)--(0,2);
\draw (0,10)--(2,10.5)--(2,9.5)--(0,10)(0,8)--(2,8.5)--(2,7.5)--(0,8)(0,6)--(2,6.5)--(2,5.5)--(0,6)(-2,2)--(-4,2.5)--(-4,1.5)--(-2,2)(0,4)--(2,4.5)--(2,3.5)--(0,4)(0,2)--(-1,0)--(1,0)--(0,2) ;
\draw (0,10.5)node{$r$}(-3.5,2)node{$B$}(-0.4,6.4)node{$v_1$}(1.5,6)node{$A_1$}(1.5,4)node{$A_2$}(-0.5,4)node{$v_2$}(0,0.9)node{$A_k$}(0.5,2)node{$v_k$}(-2,6.5)node{$w_2$}(-2,2.5)node{$w_1$};
\end{tikzpicture}
\caption{The rooted tree $T^\prime$.}
\end{subfigure}
\caption{$T^\prime$ is obtained from $T$ by the leaf swap operation.}\label{fig:LSO}
\end{figure}
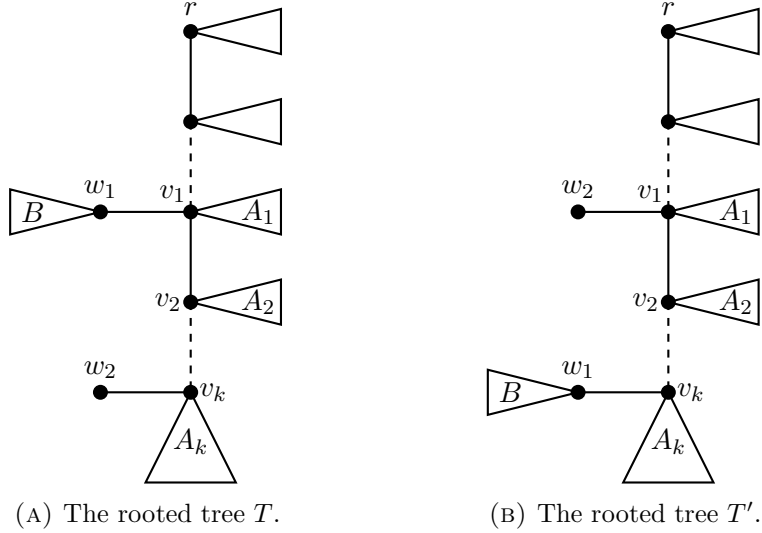

\begin{lem}\label{Lem:Swap}
Suppose that $T^\prime$ is obtained from a rooted tree $T$ by the leaf swap operation as described above and depicted in Figure \ref{fig:LSO}. Assume moreover that there is a Perron vector for $T$ with the property that the entries corresponding to leaf descendants of $v_k$ (all leaves for which the path to the root passes through $v_k$) are positive. Then $\rho_C (T^\prime) > \rho_C  (T)$.
\end{lem}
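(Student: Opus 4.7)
My plan is to mirror the Rayleigh-quotient strategy of Lemma~\ref{Lem:Branch_shifting}. Let $\f$ be a unit Perron vector of $T$ positive on the leaves descending from $v_k$. Unlike in the branch shift, the leaf swap does not entrywise dominate $C(T)$ by $C(T')$: the entries $C(T')_{w_2,y}$ for $y$ a leaf of $A_i$ (with $2\le i\le k-1$) or of $A_k\setminus\{w_2\}$, and the diagonal $C(T')_{w_2,w_2}$, strictly decrease (because $w_2$ moves up from level $\ell_T(v_k)+1$ to $\ell_T(v_1)+1$), while pairs involving leaves of $B$ (now descending from $v_k$) strictly increase. A pair-by-pair tabulation analogous to the one in Lemma~\ref{Lem:Branch_shifting} yields
\[
\f^t(C(T')-C(T))\f = (\beta - \gamma)\Big[(k-1)(\beta+\gamma) + 2\!\!\sum_{i=2}^{k-1}\!(i-1)\alpha_i + 2(k-1)\delta\Big],
\]
where $\beta = \sum_{u \in B}f(u)$, $\gamma = f(w_2)$, $\alpha_i = \sum_{v \in A_i}f(v)$, and $\delta = \sum_{v \in A_k\setminus\{w_2\}}f(v)$. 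The bracketed factor is strictly positive for $k\ge 2$, so the sign is governed by $\beta - \gamma$.

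If $\beta > \gamma$, then $\f$ itself, viewed on the leaves of $T'$ via the natural bijection, already gives $R(C(T'),\f) > R(C(T),\f) = \rho_C(T)$. If $\beta \le \gamma$ --- in particular whenever $v_1$ is the root of $T$, since then $B$ is a separate root branch and by \eqref{eq:block_diag} together with the positivity hypothesis on $A_k$ we may take $\f$ to vanish on $B$, so $\beta = 0$ --- I would pick any leaf $u^* \in B$ and form the vector $\g$ on $T'$ by swapping the entries of $\f$ at $u^*$ and at $w_2$. Since the swap is a permutation, $\|\g\| = \|\f\|$, and an analogous pair-by-pair expansion --- in which the $\ell_T(v_1)$-dependent contributions from rows $u^*$ and $w_2$ cancel under the swap, via the identity $(\beta + f(w_2) - f(u^*))^2 - f(w_2)^2 = (\beta - f(u^*))(\beta + 2f(w_2) - f(u^*))$ --- reduces $\g^t C(T')\g - \f^t C(T)\f$ to a sum of four manifestly non-negative pieces, whose leading contribution is $\bigl(f(w_2)^2 - f(u^*)^2\bigr)\bigl(\ell_T(u^*) - \ell_T(v_1) - 1\bigr)$. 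Since $w_1$ is not a leaf, $\ell_T(u^*) \ge \ell_T(v_1) + 2$, and since $f(u^*) \le \beta \le f(w_2)$ in this regime, the sum is strictly positive unless $f(u^*) = f(w_2)$ and $\beta = f(u^*)$, which forces $|B|=1$.

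This residual boundary case I would handle by a small perturbation $\g_\varepsilon = \f + \varepsilon(\mathbf{e}_{u^*} - \mathbf{e}_{w_2})$: using the eigenvalue equation $C(T)\f = \rho_C(T)\f$, one computes that $\tfrac{d}{d\varepsilon}R(C(T'),\g_\varepsilon)\big|_{\varepsilon=0}$ equals a strictly positive expression driven by $\ell_T(u^*) - \ell_T(v_1) - 1 \ge 1$ and $f(w_2) > 0$, so small $\varepsilon > 0$ gives $\rho_C(T') \ge R(C(T'),\g_\varepsilon) > \rho_C(T)$. The main obstacle throughout is the bookkeeping in the pair-by-pair expansions --- in particular spotting the cancellation identity that makes the swap argument work --- and the clean isolation of the single boundary case; the argument is more delicate than for the branch shift because the leaf swap can genuinely decrease some entries of the ancestral matrix.
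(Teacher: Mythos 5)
Your argument is correct, and it shares its skeleton with the paper's proof: both start from the same expansion of $\f^t(C(T^\prime)-C(T))\f$ (your formula agrees with the paper's, with $|A_k|_{\f}$ read as excluding $w_2$), and both then split on the sign of $|B|_{\f}-f(w_2)$ and pass to a modified test vector in the hard case. Where you genuinely diverge is in how that vector is built. The paper keeps the weight on all of $\CL(B)$ and rescales it multiplicatively, setting $g(u)=f(u)f(w_2)/|B|_{\f}$ on $\CL(B)$ and $g(w_2)=|B|_{\f}$; this changes the norm, so the paper must separately prove $\|\g\|\le\|\f\|$, distinguish whether $B$ has one or several leaves to get strictness somewhere, and treat $|B|_{\f}=0$ by yet another redistribution. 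Your two-entry swap of $f(u^*)$ and $f(w_2)$ is norm-preserving by construction, which removes the norm computation entirely; I checked your four-piece decomposition of $\g^tC(T^\prime)\g-\f^tC(T)\f$ (the pieces being $(f(w_2)^2-f(u^*)^2)(\ell_T(u^*)-\ell_T(v_1)-1)$, a term with factor $f(w_2)-f(u^*)$ coming from the interaction of $u^*$ with the other leaves of $B$, the term $(k-1)(\beta-f(u^*))(\beta-f(u^*)+2f(w_2))$ from your cancellation identity, and the external cross terms with factor $\beta-f(u^*)$), and all four are indeed non-negative when $f(u^*)\le\beta\le f(w_2)$. Your perturbation for the boundary case also checks out: there $\mathbf{h}^t\f=0$ for $\mathbf{h}=\mathbf{e}_{u^*}-\mathbf{e}_{w_2}$ and $\f^tC(T^\prime)\f=\rho_C(T)$, so the derivative equals $2\mathbf{h}^tC(T^\prime)\f=4(\ell_T(u^*)-\ell_T(v_1)-1)f(w_2)>0$ by the eigenvalue equation; the paper instead rules out equality in its first case through the eigenvalue equation at the row of $w_2$. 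One point you should make explicit: the step ``$\beta=f(u^*)$ and $f(u^*)=f(w_2)>0$ forces $|B|=1$'' requires the observation that $\f$ is either identically zero or strictly positive on each root branch (each diagonal block $C(T_j)+E_j$ is an entrywise positive matrix), since otherwise $\beta=f(u^*)>0$ only tells you that $\f$ vanishes on $\CL(B)\setminus\{u^*\}$; the paper states this fact and you use it silently. With that line added, your route is complete and, in the boundary analysis, arguably tidier than the original.
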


\begin{proof}
As indicated in Figure~\ref{fig:LSO}, we define subtrees $A_1,A_2,\ldots,A_k$ in a similar fashion as in the proof of Lemma~\ref{Lem:Branch_shifting}. Let $\f$ be a unit Perron vector for which the entries corresponding to $w_2$ and all leaves in $A_k$ are positive; such a vector exists by assumption. We write $f(u)$ for the entry of $\f$ corresponding to a leaf $u$ in $T$.
For a subtree $S$ of $T$, we set
\[|S|_{\f} =\sum_{x\in \CL (S)}f(x).\] 

To prove Lemma \ref{Lem:Swap}, we would once again like to show that the difference $R(C(T^\prime),\f) - R(C(T),\f)$ is positive. However, since this is not always the case, we will need to distinguish different cases. We have
\begin{align*}
R(C(T^\prime),\f) - R(C(T),\f) &= 2\sum_{\{x,y\}\subseteq \CL(T)}[\ell_{T^\prime}(x\vee y)-\ell_T(x\vee y)]f(x)f(y)\\
&\quad + \sum _{z\in \CL(T)} [\ell_{T^\prime}(z\vee z)-\ell_T(z\vee z)]f(z)^2.
\end{align*}
Now note that $\ell_{T^\prime}(x \vee y) =  \ell_{T}(x \vee y)$, unless $x$ or $y$ (or both) lie in $B \cup \{w_2\}$.  We consider cases where the difference $\ell_{T^\prime}(x\vee y)-\ell_T(x\vee y)$ is not equal to 0. This happens
\begin{itemize}
\item when $x\in B$ and $y \in A_i$, $i \in \{2,\ldots, k\}$; then we have
$\ell_{T^\prime}(x\vee y)-\ell_T(x\vee y) = i-1$.
\item when $x = w_2$ and $y \in A_i$, $i \in \{2,\ldots, k\}$; then we have
$\ell_{T^\prime}(x\vee y)-\ell_T(x\vee y) = 1-i$.
\item when $x, y \in B$; then we have $\ell_{T^\prime}(x\vee y)-\ell_T(x\vee y) = k-1$.
\item when $x = y = w_2$; then we have $\ell_{T^\prime}(x\vee y)-\ell_T(x\vee y) = 1-k$.
\end{itemize}
Combining all cases, we find that
\[R(C(T^\prime),\f) - R(C(T),\f) = 2\sum _{i=2}^k (i-1)L(A_i)_{\f}(|B|_{\f}-f(w_2)) + (k-1)(|B|_{\f}^2-f(w_2)^2).\]

Suppose first that $|B|_{\f} \geq f(w_2)$. Then it follows immediately that 
$$\rho_C(T^\prime) \geq R(C(T^\prime),\f) \geq R(C(T),\f) = \rho_C(T).$$
For equality to hold, $\f$ would also have to be an eigenvector for $C(T^\prime)$ corresponding to the eigenvalue $\rho_C(T^\prime) = \rho_C(T)$. But then it follows that
\begin{align*}
0 = \rho_C (T)f(w_2) - \rho_C(T^\prime)f(w_2) &= \sum_{v\in \CL(T)} \ell_T(v\vee w_2)f(v) - \sum_{v\in \CL(T^\prime)} \ell_{T^\prime}(v\vee w_2)f(v)\\
&= \sum_{v\in \CL(T)} [\ell_{T}(v\vee w_2) - \ell_{T^\prime}(v\vee w_2)]f(v)\\
&= (k-1)f(w_2) + \sum _{i=2}^k (i-1) |A_i|_{\f}\\
&\geq (k-1)f(w_2) > 0.
\end{align*}
This contradiction shows that equality cannot hold. Hence we have $\rho_C(T^\prime) > \rho_C(T)$ if $|B|_{\f} \geq f(w_2)$.

Now consider the second case that $0 < |B|_{\f}< f(w_2)$. Then we define a vector $\g$ whose entries are given by 
\[g(u) = \begin{cases}
|B|_{\f}  & \text{if } u =w_2,\\
\frac{f(u)f(w_2)}{|B|_{\f}} & \text{if } u \in B,\\
f(u) & \text{otherwise.}
\end{cases}
\]
This definition implies that $|B|_{\g} = f(w_2)$ and $|B|_{\f} = g(w_2)$. Moreover, we have $\ell_{T^\prime}(x \vee y) = \ell_{T}(x \vee y)$ if $x,y \notin \CL(B) \cup \{w_2\}$, $\ell_{T^\prime}(x \vee y) = \ell_{T}(w_2 \vee y)$ and $\ell_{T^\prime}(w_2 \vee y) = \ell_{T}(x \vee y)$ if $x \in \CL(B)$ and $y \notin \CL(B) \cup \{w_2\}$, and finally $\ell_{T^\prime}(x \vee w_2) = \ell_{T}(x \vee w_2) = \ell_{T}(v_1)$ if $x \in \CL(B)$. This means that most terms in the difference $\g^t C(T^\prime) \g - \f^t C(T) \f$ cancel. We are only left with
\begin{align*}
\g^t C(T^\prime) \g - \f^t C(T) \f &= \sum_{x \in \CL(B)} \sum_{y \in \CL(B)} [g(x)g(y) \ell_{T^\prime}(x \vee y) - f(x)f(y)\ell_{T}(x \vee y)] \\
&\quad + g(w_2)^2 \ell_{T^\prime}(w_2) - f(w_2)^2 \ell_T(w_2) \\
&= \sum_{x \in \CL(B)} \sum_{y \in \CL(B)} [(g(x)g(y) -f(x)f(y)) \ell_{T}(x \vee y) + (k-1)g(x)g(y)] \\
&\quad + (g(w_2)^2-f(w_2)^2)\ell_T(w_2) - g(w_2)^2 (k-1) \\
&= \Big( \frac{f(w_2)^2}{|B|_{\f}^2} - 1 \Big) \sum_{x \in \CL(B)} \sum_{y \in \CL(B)} f(x)f(y) \ell_{T}(x \vee y) + (k-1) |B|_{\g}^2 \\
&\quad + (g(w_2)^2-f(w_2)^2)\ell_T(w_2) - g(w_2)^2 (k-1).
\end{align*}
Since $\ell_{T}(x \vee y) \geq \ell_{T}(v_1) + 1$ for all $x,y \in \CL(B)$, it follows that
\begin{align*}
\g^t C(T^\prime) \g - \f^t C(T) \f &\geq \Big( \frac{f(w_2)^2}{g(w_2)^2} - 1 \Big) \sum_{x \in \CL(B)} \sum_{y \in \CL(B)} f(x)f(y) (\ell_{T}(v_1) + 1) + (k-1) f(w_2)^2 \\
&\quad + (g(w_2)^2-f(w_2)^2)\ell_T(w_2) - g(w_2)^2 (k-1) \\
&= \Big( \frac{f(w_2)^2}{g(w_2)^2} - 1 \Big) |B|_{\f}^2(\ell_{T}(v_1) + 1) + (g(w_2)^2 - f(w_2)^2)(\ell_T(w_2) - k+1)\\
&= \Big( \frac{f(w_2)^2}{g(w_2)^2} - 1 \Big) g(w_2)^2(\ell_{T}(v_1) + 1) + (g(w_2)^2 - f(w_2)^2)(\ell_T(v_1) +1)\\
&= 0.
\end{align*}
Moreover, we have (since $\f$ was assumed to be a unit vector)
$$1 = \|\f\|^2 = f(w_2)^2 + \sum _{u\in \CL(B)} f(u)^2 + \sum _{u \notin \CL (B) \cup \{w_2\}} f(u)^2$$
and
$$\|\g\|^2 = |B|_{\f}^2 + \frac{f(w_2)^2}{|B|_{\f}^2}\Big(\sum _{u\in \CL (B)} f(u)^2 \Big) + \sum _{u \notin \CL (B) \cup \{w_2\}} f(u)^2,$$
thus
\begin{align*}
\|\f\|^2-\|\g\|^2 &= f(w_2)^2\Big(1-\frac{\sum _{u\in \CL (B)} f(u)^2}{|B|_{\f}^2}\Big) + \sum _{u\in \CL (B)} f(u)^2 - |B|_{\f}^2\\
&= f(w_2)^2\Big(\frac{|B|_{\f}^2- \sum _{u\in \CL (B)} f(u)^2}{|B|_{\f}^2}\Big) - \Big( |B|_{\f}^2-\sum _{u\in \CL (B)} f(u)^2\Big)\\
&=\Big( |B|_{\f}^2-\sum _{u\in \CL (B)} f(u)^2\Big)\Big(\frac{f(w_2)^2}{|B|_{\f}^2}-1\Big).
\end{align*}
The second factor is strictly positive since  $|B|_{\f} < f(w_2)$ by assumption. The first factor is non-negative, since we can write it as
$$|B|_{\f}^2 - \sum_{u \in \CL(B)} f(u)^2 = \sum_{u \in \CL(B)}\sum_{v \in \CL(B) \setminus \{u\}} f(u)f(v).$$
Moreover, since $|B|_{\f} > 0$, we must have $f(u) > 0$ for all $u \in \CL(B)$ (if one of them is positive, all of them are, since the leaves of $B$ belong to the same root branch). So if $B$ contains at least two leaves, then the first factor is also strictly positive. 

Thus we conclude that $\|\g\| \leq \|\f\| = 1$, with strict inequality if $B$ contains at least two leaves. If this is the case, we can combine it with the inequality
\begin{equation}\label{eq:rayleigh_intermediate}
\g^t C(T^\prime) \g - \f^t C(T) \f \geq 0
\end{equation}
 that was proven earlier to obtain
\begin{equation}\label{eq:rayleigh_final}
\rho_C(T^\prime) \geq R(C(T^\prime),\g) > R(C(T),\f) = \rho_C(T).
\end{equation}
If $B$ only contains one leaf, then the assumption that $B$ is not just a single vertex allows us to replace the inequality $\ell_{T}(x \vee y) \geq \ell_{T}(v_1) + 1$ that was used earlier by the stronger version $\ell_{T}(x \vee y) \geq \ell_{T}(v_1) + 2$, giving us strict inequality in~\eqref{eq:rayleigh_intermediate}. Once again, we have~\eqref{eq:rayleigh_final}, which completes the proof in this case.

Finally, in the third case that $|B|_{\f} = 0$, we set
\[g(u) = \begin{cases}
0  & \text{if } u =w_2,\\
\frac{f(w_2)}{L(B)} & \text{if } u \in B,\\
f(u) & \text{otherwise.}
\end{cases}
\]
Then we can proceed in exactly the same way as in the second case.
\end{proof}

\begin{rem}
In each of the three preceding lemmas, the assumption on the Perron vector is essential to ensure strict inequality. Otherwise, we only get $\rho_C(T^\prime) \geq \rho_C(T)$ from the three operations.
\end{rem}


A greedy caterpillar (see Figure~\ref{fig:greedy}) is a rooted caterpillar with the property that the outdegrees of its internal vertices increase along the backbone. Given an outdegree sequence $S$, we can construct a greedy caterpillar $G(S)$ by the following steps:
\begin{itemize}
\item Construct the backbone, which is a path whose length is the number of non-zero entries in the outdegree sequence $S$.
\item Assign the lowest non-zero entry in $S$ (say, $s$) to the root (one end of the backbone) by attaching $s-1$ leaves to it.
\item Assign, in ascending order, non-zero entries in $S$ to the vertices on the backbone with respect to their distance from the root by attaching a suitable number of leaves.
\end{itemize}

\begin{figure}[htbp]\centering  
\begin{tikzpicture}[thick, level distance=12mm, 
  every node/.style = {shape=circle, fill, inner sep=2pt}]
\tikzstyle{level 1}=[sibling distance=10mm]
\tikzstyle{level 2}=[sibling distance=10mm]
\tikzstyle{level 3}=[sibling distance=10mm]
\tikzstyle{level 4}=[sibling distance=10mm]
\tikzstyle{level 5}=[sibling distance=10mm]
\node {}
child {node {}
child {node {}
child {node {}}
child {node {}
child {node {}}
child {node {}}
child {node {}
child {node {}}
child {node {}}
child {node {}}
child {node {}}
child {node {}}
}
child {node {}}
child {node {}}
}
child {node {}}
}}
;

\end{tikzpicture}
\caption{A greedy caterpillar with outdegree sequence $(5,5,3,1,1,0,\ldots,0)$.}\label{fig:greedy}
\end{figure}
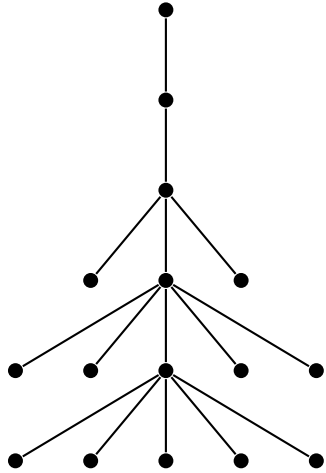

\begin{thm}\label{thm:greedy}
Among all trees with outdegree sequence $S$, the greedy caterpillar $G(S)$ has the maximum ancestral spectral radius.
\end{thm}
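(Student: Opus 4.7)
The plan is to show that any tree $T$ with outdegree sequence $S$ other than $G(S)$ admits a tree $T'$ with the same outdegree sequence satisfying $\rho_C(T')>\rho_C(T)$; since only finitely many trees share outdegree sequence $S$, this forces $G(S)$ to be the unique maximizer. The two tools are the leaf swap (Lemma \ref{Lem:Swap}), which preserves the outdegree sequence, and the branch shift (Lemma \ref{Lem:Branch_shifting}), which does not but whose compositions can.

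I would first handle the reduction to caterpillars. Suppose $T$ is not a caterpillar and pick a root branch $T_{j^*}$ of $T$ achieving the maximum block spectral radius in the decomposition \eqref{eq:block_diag}. Two sub-cases arise. If $T_{j^*}$ itself is not a caterpillar, there is an internal vertex $u$ of $T_{j^*}$ with two internal children; take $v_1=u$, take $w_1$ to be one of these internal children, and let $v_2,\ldots,v_k$ descend through the other one to the parent $v_k$ of some leaf $w_2$. Otherwise $T_{j^*}$ is a caterpillar, and the non-caterpillar defect of $T$ must be that the root has at least one other non-trivial branch; then take $v_1=r$, $w_1$ the root of such a second non-trivial branch, and $v_2,\ldots,v_k$ a descent into $T_{j^*}$ ending at the parent $v_k$ of some leaf of $T_{j^*}$. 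In either sub-case, the matrix $C(T_{j^*})+E_{j^*}$ is entrywise positive (each entry is at least $1$, because any pair of leaves inside $T_{j^*}$ has a common ancestor at level $\geq 0$ within $T_{j^*}$), so Perron--Frobenius produces a Perron vector of $C(T)$ that is strictly positive on $\CL(T_{j^*})$, hence in particular on all leaf descendants of $v_k$. Lemma \ref{Lem:Swap} then gives the strict increase. Iterating, one arrives at a caterpillar with outdegree sequence $S$ and strictly larger spectral radius than the original.

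Next, among caterpillars with outdegree sequence $S$, I would show that $G(S)$ is optimal. If a caterpillar $T$ has backbone outdegrees $(d_0,\ldots,d_{m-1})$ that are not non-decreasing, pick $i$ with $d_i>d_{i+1}$ and perform $d_i-d_{i+1}$ branch shifts in succession, each moving one leaf child of $u_i$ to become a leaf child of $u_{i+1}$ (here $v_1=u_i$, $k=2$, $v_k=u_{i+1}$, and $B$ is the shifting leaf). The same positivity argument shows that for every intermediate caterpillar the non-trivial root branch has $C+E$ entrywise positive, so the Perron vector is strictly positive on all of its leaves, in particular on the leaf descendants of $u_{i+1}$; Lemma \ref{Lem:Branch_shifting} therefore yields a strict increase at each individual shift. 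After the full sequence, the outdegrees at positions $i$ and $i+1$ have been interchanged, the outdegree multiset returns to $S$, and $\rho_C$ has strictly increased. A bubble-sort-style iteration of such adjacent swaps terminates at $G(S)$.

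The main obstacle in both steps is verifying the Perron vector positivity hypothesis of Lemmas \ref{Lem:Branch_shifting} and \ref{Lem:Swap}. It is handled uniformly by the observation that $C(T_{j^*})+E_{j^*}$ is entrywise positive for any non-trivial branch $T_{j^*}$, which pins down the support of the Perron vector and lets us position the path endpoint $v_k$ inside a branch where that vector is strictly positive; the second sub-case of Step 1, where the non-caterpillar defect lies at the root rather than inside the maximum branch, is what forces us to allow $v_1=r$.
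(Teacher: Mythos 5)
Your proof is correct and follows essentially the same route as the paper: a single application of the leaf swap (Lemma \ref{Lem:Swap}) to strictly improve any non-caterpillar, followed by a composition of branch shifts (Lemma \ref{Lem:Branch_shifting}) realizing an adjacent transposition of backbone outdegrees to strictly improve any non-greedy caterpillar. If anything, your verification of the Perron-vector positivity hypothesis via entrywise positivity of the block $C(T_{j^*})+E_{j^*}$, and your use of finiteness of the class to avoid arguing that the iteration terminates at $G(S)$, are spelled out more carefully than in the paper's own write-up, which asserts these points only briefly.
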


\begin{proof}
To prove this theorem we show that a greedy caterpillar $T^\prime$ can be obtained from any rooted tree $T$ with the same outdegree sequence by the branch shift and the leaf swap operation. Hence by Lemma \ref{Lem:Branch_shifting} and Lemma \ref{Lem:Swap}, we obtain the statement of the theorem.

If $T$ is already a caterpillar, but not a greedy caterpillar, then we can easily obtain the greedy caterpillar $G(S)$ by shifting leaves along its backbone (repeatedly applying the branch shift operation) without changing the outdegree sequence. Therefore by Lemma \ref{Lem:Branch_shifting}, we get that $\rho_C (G(S)) > \rho_C (T)$. 

Otherwise, we transform $T$ into a caterpillar. Consider the vertex closest to the root (possibly the root itself) that has more than one non-leaf child, and call it $v_1$. Moreover, let $\f$ be a Perron vector associated with $T$. At least one of the branches rooted at the children of $v_1$ must contain leaves for which the corresponding entries of $\f$ are positive: if not, then the positive entries of $\f$ would have to lie in a root branch to which $v_1$ does not belong, which would have to be a leaf. In this case, we would have $\rho_C(T) = 1$, which is impossible.

In the aforementioned branch of $v_1$, we can find a vertex $v_k$ with a leaf child $w_2$, and there is also a branch of $v_1$ (rooted at a child $w_1$ of $v_1$) that is not just a single vertex. Thus we are in the situation where the leaf swap operation applies, so we can construct a new tree $T^\prime$ with the same outdegree sequence such that $\rho_C (T^\prime) > \rho_C (T)$. This procedure can be repeated until we obtain a caterpillar.
\end{proof}

There are similar examples where ``greedy'' structures maximise the spectral radius; for instance, this is the case for the spectral radius and Laplacian spectral radius of trees \cite{Biyikoglu2008graphs,Zhang2008Laplacian}.

An immediate consequence of Theorem~\ref{thm:greedy} is Theorem~\ref{Thm:rootedBroom} below, which deals with trees for which the number of leaves and the number of vertices are given. A rooted broom $B_{m,n}$ is a rooted tree obtained by attaching $n$ leaves to one end of a path of length $m$, the other end being the root. Thus a rooted broom $B_{m,n}$ has $n$ leaves and $m+n+1$ vertices.

\begin{figure}[htbp]
\centering  
\begin{tikzpicture}[thick, level distance=10mm,scale=0.6]
\foreach \i in {2,4,6,8}{\draw (0,\i) node[shape=circle, fill, inner sep=2pt]{};\draw (0,2)--(0,\i);}
\foreach \i in {-1,1}{\draw (\i,2) node[shape=circle, fill, inner sep=2pt]{};}
\draw (-1,2)--(0,4) (1,2)--(0,4);
\end{tikzpicture}
\caption{The rooted broom $B(2,3)$.}\label{fig:Rooted_broom}
\end{figure}
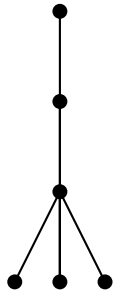

\begin{thm}\label{Thm:rootedBroom}
The rooted broom $B_{N-n-1,n}$ maximises the ancestral spectral radius among all rooted trees with $N$ vertices and $n$ leaves. We have
$$\rho_C(B_{N-n-1,n}) = n(N - n - 1) + 1.$$
\end{thm}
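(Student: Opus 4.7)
My plan is to combine Theorem~\ref{thm:greedy} with further branch shift operations from Lemma~\ref{Lem:Branch_shifting} to reduce any competing tree to the broom, and then to diagonalise the ancestral matrix of the broom directly.

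Let $T$ be a rooted tree with $N$ vertices and $n$ leaves, and let $S$ be its outdegree sequence. Since $N$ is one plus the sum of the entries of $S$ and $n$ is the number of zero entries in $S$, the greedy caterpillar $G(S)$ also has $N$ vertices and $n$ leaves. Theorem~\ref{thm:greedy} then yields $\rho_C(T) \leq \rho_C(G(S))$, so it suffices to bound $\rho_C$ on greedy caterpillars.

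Next I would repeatedly apply the branch shift operation to push leaves along the backbone to the deepest non-leaf vertex. Writing the backbone of $G(S)$ as $u_0, u_1, \ldots, u_{p-1}$ with $p = N-n$, as long as some $u_i$ with $i \leq p-2$ carries a leaf child $\ell$, Lemma~\ref{Lem:Branch_shifting}, applied to the path $u_i, u_{i+1}, \ldots, u_{p-1}$ with the single-leaf branch $B = \{\ell\}$, strictly increases $\rho_C$ and moves $\ell$ to become a child of $u_{p-1}$. Iterating this process empties $u_0, \ldots, u_{p-2}$ of leaf children and delivers the broom $B_{p-1,n} = B_{N-n-1,n}$. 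The Perron-vector hypothesis of Lemma~\ref{Lem:Branch_shifting} is satisfied at each step: the deepest non-leaf $u_{p-1}$ has only leaf children, and these leaves all lie in the unique non-trivial diagonal block $C(T_1)+E_1$ of $C(T)$, where $T_1$ is the root branch containing $u_1$. This block has strictly positive entries and is therefore irreducible with spectral radius bounded below by its minimum row sum, which is strictly greater than $1$; hence $\rho_C(T)$ is attained on this block, and Perron--Frobenius supplies a strictly positive Perron vector there.

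Finally, for $m = N-n-1$, the $n$ leaves of $B_{m,n}$ all share the same parent at level $m$, so
\[
C(B_{N-n-1,n}) = mJ + I,
\]
where $J$ denotes the $n \times n$ all-ones matrix. The eigenvalues of $mJ+I$ are $mn+1$ (simple, with all-ones eigenvector) and $1$ (of multiplicity $n-1$), giving $\rho_C(B_{N-n-1,n}) = n(N-n-1)+1$. The main technical point is the Perron-vector verification above; once that is in hand, both the reduction to the broom and the final diagonalisation are essentially forced.
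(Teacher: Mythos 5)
Your proposal is correct and follows essentially the same route as the paper: reduce to a greedy caterpillar via Theorem~\ref{thm:greedy}, push all leaves to the deepest internal vertex by repeated branch shifts (Lemma~\ref{Lem:Branch_shifting}), and read off the spectrum of $C(B_{N-n-1,n}) = (N-n-1)J + I$. The only addition is your explicit verification of the Perron-vector hypothesis via positivity of the unique non-trivial diagonal block, a detail the paper leaves implicit; it is a correct and welcome refinement rather than a different argument.
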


\begin{proof}

By the previous theorem, it is clear that the tree with greatest ancestral spectral radius among all such rooted trees has to be a greedy caterpillar. Applying the branch shift operation repeatedly to the leaves of such a greedy caterpillar to transfer all leaves to the lowest internal vertex, we obtain a rooted broom at the end, and the ancestral spectral radius increases with each step by Lemma \ref{Lem:Branch_shifting}.  

To complete the proof, we only need to determine the value of $\rho_C(B_{N-n-1,n})$. Here, we note that the entries of $C(B_{N-n-1,n})$ are all equal to $N-n -1$, except for those on the diagonal, which are equal to $N-n$. We see that $1$ is an eigenvalue of multiplicity $n-1$, the remaining eigenvalue being $n(N-n) - (n-1) = n(N-n-1) + 1$.
\end{proof}

Next, we consider another type of restriction on the degrees. We first observe that $\rho_C(T)$ is unbounded even if the number of leaves $L(T)$ is fixed: for instance, one can consider the rooted brooms from the previous theorem. This changes, however, if we forbid vertices of outdegree $1$. A tree with this property is called homeomorphically irreducible, series-reduced or topological. It turns out that the binary caterpillar tree is extremal in this case. The binary caterpillar $C_n$ (see Figure~\ref{fig:binary_cater}) is the rooted tree in which all $n-1$ internal vertices form a path with the root at one of its ends, and each of them has precisely two children. Note that there are precisely $n$ leaves. We have the following theorem.

\begin{figure}[htbp]\centering  
\begin{tikzpicture}[thick, level distance=12mm, 
  every node/.style = {shape=circle, fill, inner sep=2pt}]
\tikzstyle{level 1}=[sibling distance=10mm]
\tikzstyle{level 2}=[sibling distance=10mm]
\tikzstyle{level 3}=[sibling distance=10mm]
\tikzstyle{level 4}=[sibling distance=10mm]
\node {}
child {node {}}
child {node {}
child {node {}}
child {node {}
child {node {}}
child {node {}
child {node {}}
child {node {}
}}}}
;

\end{tikzpicture}
\caption{The binary caterpillar $C_5$.}\label{fig:binary_cater}
\end{figure}
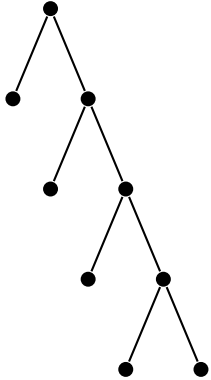

\begin{thm}\label{thm:caterpillar}
For every rooted tree $T$ with $n$ leaves and no vertices whose outdegree is $1$, we have
$$\rho_C(T) \leq \rho_C(C_n).$$
\end{thm}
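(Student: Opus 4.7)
The plan is to reduce the problem to the case of \emph{binary trees}---series-reduced rooted trees in which every internal vertex has outdegree exactly $2$---and then invoke Theorem~\ref{thm:greedy}. Note that the outdegree sequence of $C_n$ consists of $n-1$ twos followed by zeros, and the greedy caterpillar built from this sequence is $C_n$ itself. Hence Theorem~\ref{thm:greedy} already yields $\rho_C(T^\ast) \le \rho_C(C_n)$ for every binary tree $T^\ast$ with $n$ leaves, and it therefore suffices to show that for any series-reduced tree $T$ with $n$ leaves one can find a binary tree $T^\ast$ with $n$ leaves satisfying $\rho_C(T) \le \rho_C(T^\ast)$.

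To construct $T^\ast$, I would iterate the following \emph{binarization} step. Whenever some internal vertex $v$ of the current tree has outdegree $d \ge 3$ with children $c_1,\dots,c_d$, introduce a new vertex $v'$, make $v'$ a child of $v$, and re-attach $c_2,\dots,c_d$ as the children of $v'$. The leaf set is preserved; the outdegree of $v$ becomes $2$ while that of $v'$ is $d-1 \ge 2$, so the series-reduced property is preserved; and the total excess outdegree $\sum_u \max\{0, d^+(u) - 2\}$ (where $d^+(u)$ denotes the outdegree of $u$) drops by exactly $1$ per step. Consequently the process terminates after finitely many binarizations at a binary tree with $n$ leaves.

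The key point is that each binarization weakly increases $\rho_C$. A short case analysis in the spirit of the proofs of Lemmas~\ref{Lem:Branch_shifting}--\ref{Lem:Swap} shows that for every pair of leaves $x,y$ one has $\ell_{T'}(x \vee y) \ge \ell_T(x \vee y)$, with the level increasing by exactly $1$ when both $x$ and $y$ descend from one of $c_2,\dots,c_d$ (either because their common ancestor migrates from $v$ to the new vertex $v'$ when they lie in different $c_i$, or because it remains the same vertex inside a single $c_i$'s subtree, whose level in $T'$ has grown by $1$) and unchanged otherwise. It follows that $C(T') - C(T)$ has non-negative entries, so for any non-negative Perron vector $\f$ of $T$,
\[
\rho_C(T') \;\ge\; R\bigl(C(T'),\f\bigr) \;\ge\; R\bigl(C(T),\f\bigr) \;=\; \rho_C(T).
\]
Iterating until the tree is binary and then applying Theorem~\ref{thm:greedy} completes the argument. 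The main obstacle I anticipate is the careful level-bookkeeping in the binarization step, especially in the case where both leaves lie in the \emph{same} moved subtree---there the common ancestor is literally the same vertex in $T$ and $T'$ but its level has increased because everything underneath $v'$ has been pushed one step deeper. Once this is correctly handled, the reduction to the binary case and the final appeal to Theorem~\ref{thm:greedy} are routine.
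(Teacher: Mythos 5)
Your proposal is correct, but it reorders the argument relative to the paper and relies on a mild generalization of one of its lemmas. The paper's proof first invokes Theorem~\ref{thm:greedy} to reduce to a greedy caterpillar and then argues by contradiction: if that caterpillar is not $C_n$, its deepest internal vertex has at least three children, all leaves, so Lemma~\ref{Lem:Star_shift} (the star shift operation) produces a series-reduced tree with strictly larger ancestral spectral radius. You instead binarize first and apply Theorem~\ref{thm:greedy} last. The trade-offs are as follows. Your binarization step is exactly the star shift operation applied at a vertex whose children need not all be leaves, so you cannot quote Lemma~\ref{Lem:Star_shift} verbatim and must redo the level bookkeeping; your case analysis is correct (the entries of $C(T')-C(T)$ are $1$ when both leaves descend from one of $c_2,\dots,c_d$ --- whether in the same subtree, where the common ancestor is unchanged but pushed one level deeper, or in different ones, where it migrates from $v$ to $v'$ --- and $0$ otherwise, including the diagonal adjustment for leaves under the moved children). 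In exchange, you only need the weak inequality $\rho_C(T')\ge\rho_C(T)$, which follows from entrywise monotonicity and a non-negative Perron vector with no positivity hypothesis, so you avoid both the contradiction structure and the strictness conditions that the paper's lemmas carry; your argument is also a direct chain of inequalities rather than an appeal to the existence of a maximizer. Your identification of $G(S)=C_n$ for the all-twos outdegree sequence, and the termination argument via the excess-outdegree potential, are both correct.
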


\begin{proof}
Again, Theorem~\ref{thm:greedy} immediately shows that the maximum has to be attained by a greedy caterpillar. If this caterpillar is not the binary caterpillar $C_n$, then the internal vertex whose distance from the root is greatest must have at least three children. But then we can apply the star shift operation
to obtain a new tree that still does not contain any vertices of outdegree $1$ whose ancestral spectral radius is greater by Lemma~\ref{Lem:Star_shift}.

This contradiction shows that $C_n$ must indeed attain the maximum, which is exactly the statement of the theorem.
\end{proof}

Theorem~\ref{thm:caterpillar} raises the question for the value of $\rho_C(C_n)$. While there is no exact formula, we will be able to provide an implicit equation and an asymptotic formula in the following. To this end, we first determine a recursion for the characteristic polynomial of $C(C_n)$: set
\begin{equation}\label{eq:char_poly_Cn}
P_n(x) = \det(x I - C(C_n)).
\end{equation}

\begin{prop}\label{prop:caterpillar_rec}
Let $P_n(x)$ be the characteristic polynomial of the ancestral matrix of the binary caterpillar $C_n$, as defined in~\eqref{eq:char_poly_Cn}.  The following recursion holds:
$$P_n(x) = (2x-3)P_{n-1}(x) - (x-1)^2 P_{n-2}(x),$$
with initial values $P_1(x) = x$ and $P_2(x) = (x-1)^2$. 
\end{prop}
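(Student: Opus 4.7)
The plan is to exploit the branch decomposition at the root of $C_n$ together with the matrix determinant lemma, and to introduce an auxiliary one-parameter family of determinants that converts the block expansions into a linear algebraic identity.

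First I would apply~\eqref{eq:block_diag} at the root of $C_n$, whose two branches are a single leaf and a tree isomorphic to $C_{n-1}$. This immediately gives
\[P_n(x) = (x-1)\,Q_{n-1}(x), \qquad \text{where } Q_m(x) := \det\bigl(xI - C(C_m) - E_m\bigr).\]
So the task reduces to expressing $Q_{n-1}$ via $P_{n-1}$ and $P_{n-2}$. The key observation is that, since $E_m = \mathbf{1}\mathbf{1}^t$ has rank one, the matrix determinant lemma yields, for every scalar $\alpha$,
\[R_m(x,\alpha) := \det\bigl(xI - C(C_m) - \alpha E_m\bigr) = P_m(x) - \alpha\,S_m(x),\]
where $S_m(x) := \mathbf{1}^t\operatorname{adj}(xI - C(C_m))\mathbf{1}$. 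In particular $R_m$ is affine in $\alpha$, and $Q_m = P_m - S_m$.

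Next I would apply the branch decomposition once more, this time to $C_{n-1}$, whose two root branches are again a single leaf and a copy of $C_{n-2}$. Writing $E_{n-1}$ in the corresponding block form produces
\[xI - C(C_{n-1}) - E_{n-1} = \begin{pmatrix} x-2 & -\mathbf{1}^t \\ -\mathbf{1} & M\end{pmatrix}, \qquad M := xI - C(C_{n-2}) - 2E_{n-2}.\]
The block determinant identity $\det\bigl(\begin{smallmatrix} a & b^t \\ c & D\end{smallmatrix}\bigr) = a\det(D) - b^t\operatorname{adj}(D)c$ gives $Q_{n-1}(x) = (x-2)\det(M) - \mathbf{1}^t\operatorname{adj}(M)\mathbf{1}$. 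A second application of the matrix determinant lemma identifies $\mathbf{1}^t\operatorname{adj}(M)\mathbf{1} = \det(M) - \det(M - E_{n-2}) = R_{n-2}(x,2) - R_{n-2}(x,3) = S_{n-2}(x)$, and substituting $\det(M) = R_{n-2}(x,2) = P_{n-2}(x) - 2S_{n-2}(x)$ gives, after a short simplification,
\[Q_{n-1}(x) = (x-2)\,P_{n-2}(x) - (2x-3)\,S_{n-2}(x).\]

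To finish, I would use the previous identity $P_{n-1}(x) = (x-1)Q_{n-2}(x) = (x-1)\bigl(P_{n-2}(x) - S_{n-2}(x)\bigr)$ to eliminate $S_{n-2}$, giving $(x-1)S_{n-2}(x) = (x-1)P_{n-2}(x) - P_{n-1}(x)$. Substituting this into $P_n(x) = (x-1)Q_{n-1}(x)$ and simplifying produces exactly the desired recursion $P_n(x) = (2x-3)P_{n-1}(x) - (x-1)^2 P_{n-2}(x)$. The initial values $P_1(x) = x$ and $P_2(x) = (x-1)^2$ are immediate from $C(C_1) = [0]$ and $C(C_2) = I_2$. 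The only real conceptual step is the introduction of the affine parameter in $R_m(x,\alpha)$; without the resulting linearity, the two nested block expansions would leave behind an unmanageable trace-of-adjugate term, whereas here the calculation collapses cleanly. Everything else is routine bookkeeping.
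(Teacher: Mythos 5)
Your proof is correct, but it takes a genuinely different route from the paper's. The paper works directly with the explicit matrix of $C(C_n)$: it introduces an auxiliary matrix $H_n$ that differs from $C(C_n)$ in a single diagonal entry, uses linearity of the determinant in the last row together with explicit row and column operations, and combines the two resulting relations between $\det(xI-C(C_n))$ and $\det(xI-H_n)$ to close the recursion. You instead exploit the structural recursion $C(C_n)=\operatorname{diag}\bigl(1,\,C(C_{n-1})+E_{n-1}\bigr)$ coming from~\eqref{eq:block_diag}, and handle the rank-one perturbations $\alpha E_m$ via the matrix determinant lemma, so that the auxiliary quantity closing your recursion is $S_m(x)=\mathbf{1}^t\operatorname{adj}(xI-C(C_m))\mathbf{1}$ rather than $\det(xI-H_m)$; the affine dependence of $\det(xI-C(C_m)-\alpha E_m)$ on $\alpha$ is exactly what lets you identify $\mathbf{1}^t\operatorname{adj}(M)\mathbf{1}$ with $S_{n-2}$ and eliminate it. I checked the computations: the block form $\bigl(\begin{smallmatrix}x-2 & -\mathbf{1}^t\\ -\mathbf{1} & M\end{smallmatrix}\bigr)$ with $M=xI-C(C_{n-2})-2E_{n-2}$ is right, the Schur-complement identity and the two applications of the determinant lemma are applied with the correct signs, and the final elimination of $S_{n-2}$ via $P_{n-1}=(x-1)(P_{n-2}-S_{n-2})$ yields exactly $(2x-3)P_{n-1}-(x-1)^2P_{n-2}$; the argument is valid for all $n\ge 3$ (the base case $n=3$ works with $C_1$ a single leaf and $S_1=1$). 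What your approach buys is conceptual transparency and independence from the explicit matrix entries --- it would adapt readily to other families of trees built by repeatedly attaching a fixed branch at the root --- while the paper's row-reduction is more elementary in that it needs nothing beyond multilinearity and cofactor expansion.
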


\begin{proof}
The initial values are easily determined from the matrices
$$C(C_1) = \begin{bmatrix} 0 \end{bmatrix} \qquad \text{and} \qquad C(C_2) = \begin{bmatrix} 1 & 0 \\ 0 & 1 \end{bmatrix},$$
so we focus on the recursion. The ancestral matrix of the caterpillar $C_n$ has the form
$$C(C_n) = \begin{bmatrix}
1 & 0 & 0 & 0 & \cdots & 0 & 0 \\
0 & 2 & 1 & 1 & \cdots & 1 & 1 \\
0 & 1 & 3 & 2 & \cdots & 2 & 2 \\
0 & 1 & 2 & 4 & \cdots & 3 & 3 \\
\vdots & \vdots & \vdots & \vdots & \ddots & \vdots & \vdots \\
0 & 1 & 2 & 3 & \cdots & n-1 & n-2 \\
0 & 1 & 2 & 3 & \cdots & n-2 & n-1
\end{bmatrix}.$$
We will also need the following auxiliary matrix, which only differs in the last entry:
$$H_n = \begin{bmatrix}
1 & 0 & 0 & 0 & \cdots & 0 & 0 \\
0 & 2 & 1 & 1 & \cdots & 1 & 1 \\
0 & 1 & 3 & 2 & \cdots & 2 & 2 \\
0 & 1 & 2 & 4 & \cdots & 3 & 3 \\
\vdots & \vdots & \vdots & \vdots & \ddots & \vdots & \vdots \\
0 & 1 & 2 & 3 & \cdots & n-1 & n-2 \\
0 & 1 & 2 & 3 & \cdots & n-2 & n
\end{bmatrix}.$$
Note that the $(n-1) \times (n-1)$ submatrix obtained by removing the last row and column is the same for $C(C_n)$ and $H_n$, namely $H_{n-1}$. Using the linearity of the determinant with respect to the last row, we get
\begin{align*}
P_n(x) &= \det(x I - C(C_n)) =
\begin{vmatrix}
x-1 & 0 & \cdots & 0 & 0 \\
0 & x-2 & \cdots & -1 & -1 \\
\vdots & \vdots & \ddots & \vdots & \vdots \\
0 & -1 & \cdots & x-n+1 & 2-n \\
0 & -1 & \cdots & 2-n & x-n+1
\end{vmatrix} \\
&= \begin{vmatrix}
x-1 & 0 & \cdots & 0 & 0 \\
0 & x-2 & \cdots & -1 & -1 \\
\vdots & \vdots & \ddots & \vdots & \vdots \\
0 & -1 & \cdots & x-n+1 & 2-n \\
0 & -1 & \cdots & 2-n & x-n
\end{vmatrix}
+
\begin{vmatrix}
x-1 & 0 & \cdots & 0 & 0 \\
0 & x-2 & \cdots & -1 & -1 \\
\vdots & \vdots & \ddots & \vdots & \vdots \\
0 & -1 & \cdots & x-n+1 & 2-n \\
0 & 0 & \cdots & 0 & 1
\end{vmatrix},
\end{align*}
so
\begin{equation}\label{eq:pathdet1}
P_n(x) = \det(x I - C(C_n)) = \det(x I - H_n) + \det(x I - H_{n-1}).
\end{equation}
On the other hand, subtracting the second-to-last row from the last, then the second-to-last column from the last, we find that 
\begin{align*}
P_n(x) &= \det(x I - C(C_n)) \\
&= 
\begin{vmatrix}
x-1 & 0 & 0 & 0 & \cdots & 0 & 0 \\
0 & x-2 & -1 & -1 & \cdots & -1 & -1 \\
0 & -1 & x-3 & -2 & \cdots & -2 & -2 \\
0 & -1 & -2 & x-4 & \cdots & -3 & -3 \\
\vdots & \vdots & \vdots & \vdots & \ddots & \vdots & \vdots \\
0 & -1 & -2 & -3 & \cdots & x-n+1 & 2-n \\
0 & -1 & -2 & -3 & \cdots & 2-n & x-n+1
\end{vmatrix} \\
&= \begin{vmatrix}
x-1 & 0 & 0 & 0 & \cdots & 0 & 0 \\
0 & x-2 & -1 & -1 & \cdots & -1 & -1 \\
0 & -1 & x-3 & -2 & \cdots & -2 & -2 \\
0 & -1 & -2 & x-4 & \cdots & -3 & -3 \\
\vdots & \vdots & \vdots & \vdots & \ddots & \vdots & \vdots \\
0 & -1 & -2 & -3 & \cdots & x-n+1 & 2-n \\
0 & 0 & 0 & 0 & \cdots & 1-x & x-1
\end{vmatrix} \\
&= \begin{vmatrix}
x-1 & 0 & 0 & 0 & \cdots & 0 & 0 \\
0 & x-2 & -1 & -1 & \cdots & -1 & 0 \\
0 & -1 & x-3 & -2 & \cdots & -2 & 0 \\
0 & -1 & -2 & x-4 & \cdots & -3 & 0 \\
\vdots & \vdots & \vdots & \vdots & \ddots & \vdots & \vdots \\
0 & -1 & -2 & -3 & \cdots & x-n+1 & 1-x \\
0 & 0 & 0 & 0 & \cdots & 1-x & 2x-2
\end{vmatrix}.
\end{align*}
Now use row expansion with respect to the last row, followed by column expansion with respect to the last column. This yields
$$P_n(x) = (2x-2) \det(xI - H_{n-1}) - (1-x)^2 \det(xI - H_{n-2}).$$
Combining this with~\eqref{eq:pathdet1}, we find
$$\det(x I - H_n) = (2x-3) \det(xI - H_{n-1}) - (1-x)^2 \det(xI - H_{n-2}).$$
Invoking~\eqref{eq:pathdet1} once again, we end up with
\begin{align*}
P_n(x) &= \det(x I - H_n) + \det(x I - H_{n-1}) \\
&= (2x-3) \big(\det(xI - H_{n-1}) + \det(xI - H_{n-2}) \big) \\
&\quad - (1-x)^2 \big( \det(xI - H_{n-2}) + \det(xI - H_{n-3}) \big) \\
&= (2x-3)P_{n-1}(x) - (x-1)^2 P_{n-2}(x).
\end{align*}
This completes the proof.
\end{proof}

The polynomial $P_n(x)$ can be expressed in terms of Chebyshev polynomials. Chebyshev polynomials also occur, for example, in the characteristic polynomials (with respect to the adjacency matrix) of the path and the cycle, see e.g. \cite[Section 3.1]{Aigner}. Recall that the Chebyshev polynomials $T_n(x)$ and $U_n(x)$ are given by the recursions
$$T_0(x) = 1,\ T_1(x) = x,\ T_n(x) = 2x T_{n-1}(x) - T_{n-2}(x)$$
and
$$U_0(x) = 1,\ U_1(x) = 2x,\ U_n(x) = 2x U_{n-1}(x) - U_{n-2}(x).$$
If we substitute $P_n(x) = (x-1)^n Q_n(x)$, then the recursion of Proposition~\ref{prop:caterpillar_rec} becomes
$$Q_n(x) = \frac{2x-3}{x-1} Q_{n-1}(x) - Q_{n-2}(x),$$
with the initial values $Q_1(x) = \frac{x}{x-1}$ and $Q_2(x) = 1$. We observe that $Q_n(x)$ satisfies the same linear recursion as $T_n(\frac{2x-3}{2x-2})$ and $U_n(\frac{2x-3}{2x-2})$ or any linear combination of these two. It is easy to verify that
$$\frac{2x}{2x-3} T_{n-2} \Big( \frac{2x-3}{2x-2} \Big) - \frac{3}{2x-3} U_{n-2} \Big( \frac{2x-3}{2x-2} \Big)$$
has the same values for $n=2$ and $n=3$ as $Q_n$, and since they satisfy the same second-order linear recursion, we must have
$$Q_n(x) = \frac{2x}{2x-3} T_{n-2} \Big( \frac{2x-3}{2x-2} \Big) - \frac{3}{2x-3} U_{n-2} \Big( \frac{2x-3}{2x-2} \Big)$$
for all $n \geq 2$, thus
\begin{equation}\label{eq:Pn_ito_chebyshev}
P_n(x) = (x-1)^n \Big( \frac{2x}{2x-3} T_{n-2} \Big( \frac{2x-3}{2x-2} \Big) - \frac{3}{2x-3} U_{n-2} \Big( \frac{2x-3}{2x-2} \Big) \Big).
\end{equation}
The Chebyshev polynomials are well known to be connected to trigonometric functions by the identities
$$T_n(\cos t) = \cos(n t)\qquad \text{and} \qquad U_n(\cos t) = \frac{\sin((n+1)t)}{\sin t}.$$
This motivates the substitution $\frac{2x-3}{2x-2} = \cos t$ (equivalently, $x = 1 + \frac{1}{4\sin^2 (t/2)}$) in~\eqref{eq:Pn_ito_chebyshev}, which gives us
$$P_n(x) = (2 \sin (t/2))^{-2n} \Big( \Big( \frac{3}{\cos t} - 2 \Big) \cos((n-2)t) - \Big( \frac{3}{\cos t} - 3 \Big) \frac{\sin((n-1)t)}{\sin t} \Big).$$
Now use the addition theorem for the sine function to rewrite $\sin((n-1)t)$ as $\sin((n-2)t) \cos t + \cos((n-2)t) \sin t$, which results in the following simplified expression:
$$P_n(x) = \frac{1}{(2 \sin (t/2))^{2n}\sin t} \Big( \sin t \cos((n-2)t) - 3(1-\cos t) \sin((n-2)t) \Big).$$
Thus the characteristic equation $P_n(x) = 0$ reduces to
$$\sin t \cos((n-2)t) = 3(1-\cos t) \sin((n-2)t)$$
or
\begin{equation}\label{eq:char_eq_trig}
\cot ((n-2)t) = \frac{3(1-\cos t)}{\sin t} = 3 \tan(t/2).
\end{equation}
The following asymptotic formula is now a fairly straightforward consequence:

\begin{thm}
The spectral radius of the ancestral matrix of the caterpillar $C_n$ satisfies the asymptotic formula
$$\rho_C(C_n) = \frac{4n^2}{\pi^2} - \frac{4n}{\pi^2} + O(1)$$
as  $n \to \infty$.
\end{thm}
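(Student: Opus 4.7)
The plan is to exploit the characteristic equation~\eqref{eq:char_eq_trig} together with the substitution $x = 1 + \frac{1}{4\sin^2(t/2)}$ to asymptotically locate the largest eigenvalue. Since this substitution is strictly decreasing on $(0,\pi)$ in the variable $t$, the spectral radius $\rho_C(C_n)$ corresponds to the \emph{smallest} positive $t_n$ satisfying $\cot((n-2)t_n) = 3\tan(t_n/2)$. So the problem reduces to finding the asymptotics of $t_n$.

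The first step is to localise $t_n$. On the interval $(0,\pi/(n-2))$, the left-hand side $\cot((n-2)t)$ decreases continuously from $+\infty$ to $-\infty$, while the right-hand side $3\tan(t/2)$ is small and positive. Hence there is a unique solution in this interval, and it must lie in $(0,\pi/(2(n-2)))$, where $\cot((n-2)t)>0$. Writing $t_n = \frac{\pi/2 - \delta_n}{n-2}$ with $\delta_n>0$, the characteristic equation becomes
\begin{equation*}
\tan(\delta_n) = 3\tan(t_n/2).
\end{equation*}

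The second step is to bootstrap this identity. Since $t_n = O(1/n)$, the right-hand side is $\tfrac{3t_n}{2} + O(t_n^3) = O(1/n)$, forcing $\delta_n = O(1/n)$, hence $\delta_n = \tfrac{3t_n}{2} + O(1/n^3)$. Substituting $t_n = \frac{\pi/2 - \delta_n}{n-2}$ back in and solving for $\delta_n$ gives
\begin{equation*}
\delta_n = \frac{3\pi}{4(n-2)} + O(1/n^2),
\end{equation*}
and therefore
\begin{equation*}
(\pi/2 - \delta_n)^2 = \frac{\pi^2}{4}\left(1 - \frac{3}{n-2} + O(1/n^2)\right).
\end{equation*}

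The final step is routine expansion. From $\rho_C(C_n) = 1 + \frac{1}{4\sin^2(t_n/2)}$ and the Taylor expansion $4\sin^2(t_n/2) = t_n^2 + O(t_n^4)$, we get $\rho_C(C_n) = 1 + t_n^{-2} + O(1)$. Combining this with
\begin{equation*}
\frac{1}{t_n^2} = \frac{(n-2)^2}{(\pi/2-\delta_n)^2} = \frac{4(n-2)^2}{\pi^2}\left(1 + \frac{3}{n-2} + O(1/n^2)\right) = \frac{4n^2 - 4n}{\pi^2} + O(1)
\end{equation*}
yields the claimed asymptotic. The main technical point to get right is the bootstrapping in the second step: one must ensure that the implicit-function/iteration argument for $\delta_n$ gives a genuine $O(1/n^2)$ error (rather than $O(1/n)$), since that is what ensures the linear term $-\tfrac{4n}{\pi^2}$ is determined and the remainder is indeed $O(1)$.
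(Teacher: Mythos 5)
Your proof is correct and takes essentially the same route as the paper: both locate $\rho_C(C_n)$ as the smallest positive root $t$ of $\cot((n-2)t)=3\tan(t/2)$ in $(0,\tfrac{\pi}{2(n-2)})$, extract the second-order asymptotics of that root by a bootstrap, and then expand $1+\tfrac{1}{4\sin^2(t/2)}$. Your exact reparametrization $\cot(\pi/2-\delta_n)=\tan\delta_n$ is a minor (and slightly cleaner) variant of the paper's Taylor approximation of the cotangent near $\pi/2$, and your error bookkeeping correctly pins down the $-4n/\pi^2$ term.
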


\begin{proof}
Recall that $x = 1 + \frac{1}{4\sin^2 t/2}$ in~\eqref{eq:char_eq_trig}. The greatest eigenvalue thus corresponds to the smallest positive value of $t$ (which we will denote by $t_0$) that satisfies
\begin{equation}\label{eq:implicit_t}
\cot ((n-2)t) = 3 \tan(t/2).
\end{equation}
For large $n$, we know that the right side is positive and increasing for $t \in (0,\frac{\pi}{2(n-2)})$, while the left side is decreasing and covers the entire range from $0$ to $\infty$. Thus there must be a (unique) solution in that interval by the intermediate value theorem. Since we are looking for the smallest positive $t$ that satisfies~\eqref{eq:implicit_t}, we can conclude that $t_0 < \frac{\pi}{2(n-2)}$, thus $t_0 \to 0$ as $n \to \infty$. So $\cot ((n-2)t_0)$ must be close to $0$, and since $\frac{\pi}{2}$ is the smallest positive zero of the cotangent, we infer that $t_0 \sim \frac{\pi}{2n}$. Thus
$$\cot((n-2)t_0) = 3\tan(t_0/2) \sim \frac{3\pi}{4n},$$
and the Taylor approximation of the cotangent yields a second-order approximation for $t_0$:
$$\cot((n-2)t_0) \sim \frac{\pi}{2} - (n-2)t_0,$$
which results in
$$t_0 - \frac{\pi}{2n} \sim -\frac{1}{n} \Big( \cot((n-2)t_0) - 2t_0 \Big) = -\frac{1}{n} \Big( 3\tan(t_0/2) - 2t_0 \Big) \sim \frac{t_0}{2n} \sim \frac{\pi}{4n^2}.$$
Continuing in this way, one could even determine further terms of an asymptotic expansion. Since our initial substitution was $x = 1 + \frac{1}{4\sin^2 t/2}$, we have to plug the formula
$$t_0 = \frac{\pi}{2n} + \frac{\pi}{4n^2} + O(n^{-3})$$
in for $t$, which gives us
$$\rho(C_n) = 1 + \frac{1}{4\sin^2 t_0/2} = \frac{4n^2}{\pi^2} - \frac{4n}{\pi^2} + O(1).$$
\end{proof}

\section{Determinants and the characteristic polynomial}

In this section, we take a closer look at the characteristic polynomial
$$\Gamma_T(x) = \det(x I - C(T)) = \sum_{k=0}^n (-1)^{k} \gamma_k(T) x^{n-k}.$$
Specifically, we will determine a combinatorial interpretation for the coefficients $\gamma_k(T)$ of this polynomial, similar to the classical combinatorial formulas due to Sachs \cite{Sachs1964beziehungen} and Kelmans \cite{Kelmans1967properties} for the coefficients of the characteristic polynomials of the adjacency matrix and the Laplacian matrix, respectively (see e.g. \cite[Sections 1.4 and 1.5]{Cvetkovic1995spectra}). Moreover, using a recursive approach similar to the proof of Proposition~\ref{prop:caterpillar_rec}, we find that specific values of the characteristic polynomial are independent of the precise structure of the tree when $d$-ary trees are considered, see Theorem~\ref{thm:det_dary}.

A very basic observation can be made about the coefficient $\gamma_1(T)$, which is equal to the trace of $C(T)$ and thus also the sum of the eigenvalues.

\begin{prop}
Let $T$ be a rooted tree with root $r$ and $n$ leaves, and let $\alpha_1,\alpha_2,\ldots,\alpha_n$ be the eigenvalues of its ancestral matrix $C(T)$. We have
$$\gamma_1(T) = \sum_{k=1}^n \alpha_k  = \operatorname{tr}(C(T)) = D_T(r),$$
where $D_T(r)$ denotes the sum of the distances of all leaves to the root.
\end{prop}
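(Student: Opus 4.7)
The plan is to unpack the three claimed equalities in turn, each of which is essentially a bookkeeping exercise. The relation $\sum_{k=1}^n \alpha_k = \operatorname{tr}(C(T))$ is a standard fact: if $\Gamma_T(x) = \prod_{k=1}^n (x - \alpha_k) = \sum_k (-1)^k \gamma_k(T) x^{n-k}$, then by comparing the coefficient of $x^{n-1}$ on both sides one gets $\gamma_1(T) = \sum_{k=1}^n \alpha_k$, and since the sum of eigenvalues of any square matrix equals its trace, both equal $\operatorname{tr}(C(T))$.

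The only substantive step is therefore the identification $\operatorname{tr}(C(T)) = D_T(r)$. By the definition of $C(T)$, the diagonal entry in row $i$ is $c_{ii} = \ell(v_i \vee v_i)$, which is simply $\ell(v_i)$, the level of the leaf $v_i$; since the level of a vertex is by definition its distance from the root, we have $c_{ii} = d(v_i, r)$. Summing over $i$ gives
\[
\operatorname{tr}(C(T)) = \sum_{i=1}^n c_{ii} = \sum_{v \in \CL(T)} d(v,r) = D_T(r),
\]
which is exactly the claim. No obstacle arises here — the statement is really just the observation that the diagonal of the ancestral matrix records the leaf depths.
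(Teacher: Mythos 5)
Your proposal is correct and follows exactly the paper's argument: the first two equalities are standard linear algebra (coefficient of $x^{n-1}$ and trace equals sum of eigenvalues), and the identity $\operatorname{tr}(C(T)) = D_T(r)$ comes from observing that each diagonal entry $c_{ii} = \ell(v_i \vee v_i) = \ell(v_i)$ is the distance of the leaf $v_i$ to the root. The paper's proof is just a terser version of the same reasoning.
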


\begin{proof}
The identity of the first three expressions is basic linear algebra. To see that $\operatorname{tr}(C(T))$ equals $D_T(r)$, simply note that the diagonal entry in $C(T)$ corresponding to a leaf equals the level (distance to the root) of that leaf.
\end{proof}

The general interpretation of the coefficients $\gamma_k(T)$ is somewhat more involved. We need to start with a few definitions. An upward path from a leaf is a path (potentially trivial, i.e., only consisting of the leaf itself, without any edges) starting at a leaf and only moving towards the root. It is easy to see that a vertex at level $\ell$ has $\ell+1$ upward paths emanating from it, including the trivial path. This will be important later. 
We will be specifically interested in collections of upward paths in a rooted tree, one starting from each of the leaves, that are edge-disjoint (not necessarily vertex-disjoint). See Figure~\ref{fig:nc_collection} for an example of an edge-disjoint collection in the tree of Figure~\ref{fig:first_example}.
These edge-disjoint collections are counted by the coefficients of the characteristic polynomial of $C(T)$. The following result and its proof are reminiscent of the well-known Lindstr\"om-Gessel-Viennot Lemma (\cite{Gessel1985Binomial}, see also for example \cite[Section 5.4]{Aigner}).

\begin{figure}[htbp]\centering  
\begin{tikzpicture}[dashed, level distance=12mm, 
  every node/.style = {shape=circle, fill, inner sep=2pt}]
\tikzstyle{level 1}=[sibling distance=20mm]
\tikzstyle{level 2}=[sibling distance=10mm]
\tikzstyle{level 2}=[sibling distance=10mm]
\node {}
child {node {}
child {node {}} 
child {node {}}}
child {node {} 
child {node {}} 
child {node {}}
child {node {}
child {node{}}
child {node{}}
}};

\node [draw=none,fill=none] at (-1.5,-2.8) {$v_1$};
\node [draw=none,fill=none] at (-0.5,-2.8) {$v_2$};
\node [draw=none,fill=none] at (0,-2.8) {$v_3$};
\node [draw=none,fill=none] at (1,-2.8) {$v_4$};
\node [draw=none,fill=none] at (1.5,-4) {$v_5$};
\node [draw=none,fill=none] at (2.5,-4) {$v_6$};

\draw [solid, thick,->] (-0.5,-2.4) -- (-0.95,-1.32);
\draw [solid, thick,->] (0,-2.4) -- (1,-1.2) -- (0.1,-0.12);
\draw [solid, thick,->] (1.5,-3.6) -- (1.95,-2.52);
\draw [solid, thick,->] (2.5,-3.6) -- (2,-2.4) -- (1.1,-1.32);

\end{tikzpicture}
\caption{An edge-disjoint collection (the paths emanating from $v_1$ and $v_4$ are trivial).}\label{fig:nc_collection}
\end{figure}
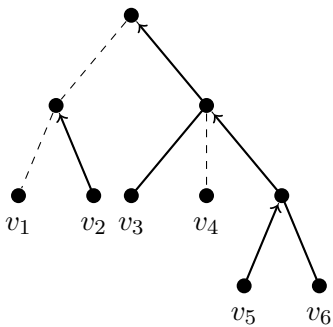

\begin{thm}\label{thm:edge-disjoint}
Let
$$\Gamma_T(x) = \det(x I - C(T)) = \sum_{k=0}^n (-1)^{k} \gamma_k(T) x^{n-k}$$
be the characteristic polynomial of the ancestral matrix $C(T)$ of a rooted tree $T$. The coefficient $\gamma_k(T)$ is the number of edge-disjoint collections of upward paths where exactly $k$ of the paths are non-trivial. Consequently,
$$\det(I + C(T)) = (-1)^n \Gamma_T(-1) = \sum_{k=0}^n \gamma_k(T)$$
is the total number of edge-disjoint collections of upward paths.
\end{thm}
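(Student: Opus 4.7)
The plan is to combine the factorisation $C(T)=I_p(T)I_p(T)^t$ with the Cauchy--Binet formula and then apply a Lindstr{\"o}m--Gessel--Viennot-style involution on signed matchings. Since $\gamma_k(T)$ is the $k$-th elementary symmetric polynomial in the eigenvalues, we have $\gamma_k(T)=\sum_{|S|=k}\det C(T)_{S,S}$, and applying Cauchy--Binet to each principal $k\times k$ minor gives
\[\gamma_k(T)=\sum_{|S|=|E|=k}\bigl(\det I_p(T)[S,E]\bigr)^2.\]
The target reduces to showing that each squared determinant equals $1$ exactly when there is an edge-disjoint collection of upward paths whose $k$ non-trivial members emanate from the leaves in $S$ with set of topmost edges equal to $E$, and equals $0$ otherwise. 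Fix once and for all a total order on the leaves and on the edges of $T$.

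Expand $\det I_p(T)[S,E]=\sum_\sigma\sgn(\sigma)$, summed over valid bijections $\sigma\colon S\to E$, where ``valid'' means $\sigma(v)\in P(v,r)$ for every $v\in S$. Each $\sigma$ yields a system of upward paths $\pi_v$ going from $v\in S$ up to the upper endpoint of $\sigma(v)$; I call $\sigma$ \emph{good} if the $\pi_v$ are pairwise edge-disjoint and \emph{bad} otherwise. The first ingredient I would prove is \emph{uniqueness of good matchings}: for fixed $(S,E)$ at most one good $\sigma$ exists. Indeed, two good matchings $\sigma,\sigma'$ with $\sigma\neq\sigma'$ would yield a non-trivial cycle $(w_1,\dots,w_r)$ of $\sigma^{-1}\sigma'$. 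Validity of $\sigma'$ forces $\sigma(w_{a+1})\in P(w_a,r)$, while edge-disjointness of $\sigma$'s paths forces $\pi_{w_a}$ not to use $\sigma(w_{a+1})$, so $\pi_{w_a}$ must stop strictly below $\sigma(w_{a+1})^{+}$ on $w_a$'s root-path; hence $\ell(\sigma(w_a)^{+})>\ell(\sigma(w_{a+1})^{+})$ for every $a$, contradicting periodicity around the cycle.

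The second ingredient is \emph{cancellation of bad matchings}: $\sum_{\text{bad }\sigma}\sgn(\sigma)=0$, proved by a sign-reversing involution. For a bad $\sigma$, select the topmost edge $e^{\ast}$ used by more than one of the $\pi_v$ (using the fixed total order on edges to break ties), let $v_{i_1}<v_{i_2}$ be the two smallest-indexed leaves whose paths use $e^{\ast}$, and transpose $\sigma(v_{i_1})\leftrightarrow\sigma(v_{i_2})$. Combining the two ingredients, $\det I_p(T)[S,E]$ equals $\sgn(\sigma_0)$ when a (necessarily unique) good $\sigma_0$ exists and $0$ otherwise, so its square is precisely the indicator that $(S,E)$ is realised by some edge-disjoint collection with $k$ non-trivial paths. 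Since each such collection determines $S$ (the non-trivial-path leaves) and $E$ (their top edges), summing over $(S,E)$ yields $\gamma_k(T)$ as claimed. The final equality $\det(I+C(T))=\sum_k\gamma_k(T)$ is immediate from $(-1)^n\Gamma_T(-1)=\det(I+C(T))$ and $(-1)^n\Gamma_T(-1)=\sum_k\gamma_k(T)$.

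The main technical obstacle is verifying that the proposed map is genuinely an involution, i.e., that neither $e^{\ast}$ nor the pair $(v_{i_1},v_{i_2})$ changes after the swap. The crucial observation is that all top edges $\sigma(v_{i_s})$ (for $s=1,\dots,t$, the $t\ge 2$ users of $e^{\ast}$) lie on a common upward line above $e^{\ast}$, so the transposition only rearranges the top-edge assignment within $\{v_{i_1},\dots,v_{i_t}\}$ while preserving this multiset of positions. Consequently the number of users of every edge above $e^{\ast}$ is unchanged (if the swap removes one of $v_{i_1},v_{i_2}$ as a user of some higher edge $e$, it simultaneously adds the other), so no new over-usage is created above $e^{\ast}$; at edges below $e^{\ast}$, the paths $\pi_{v_{i_1}}$ and $\pi'_{v_{i_1}}$ agree (and likewise for $v_{i_2}$), so user counts there are unaffected; and the set of users of $e^{\ast}$ itself is preserved because every top edge at or above $e^{\ast}$ still forces its leaf to pass through $e^{\ast}$. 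Hence $e^{\ast}$ remains the topmost over-used edge with the same user set, so $(v_{i_1},v_{i_2})$ is again the distinguished pair and iterating the construction recovers $\sigma$.
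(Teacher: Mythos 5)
Your proof is correct, but it takes a genuinely different route from the paper's. The paper expands $\det(xI + C(T))$ directly via the Leibniz formula, interprets each product $\prod_i m_{i,\sigma(i)}$ as a generating sum over path collections ``compatible'' with $\sigma$, cancels the intersecting collections by a tail-swapping involution, and then still needs a separate induction to show that an edge-disjoint collection is compatible only with the identity permutation. You instead exploit the factorization $C(T)=I_p(T)I_p(T)^t$ (which the paper states in the introduction but never uses in this proof): writing $\gamma_k(T)$ as a sum of principal minors and applying Cauchy--Binet reduces everything to showing that each minor $\det I_p(T)[S,E]$ lies in $\{0,\pm 1\}$ and is nonzero exactly when $(S,E)$ is realized by an edge-disjoint system. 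Your two ingredients are both sound: the uniqueness of good matchings via the strictly decreasing levels of the top edges around a cycle of $\sigma^{-1}\sigma'$, and the sign-reversing involution on bad matchings; in particular, your careful check that the swap preserves the topmost over-used edge $e^{\ast}$ and its user set (because the two swapped top edges lie on the common root-path above $e^{\ast}$, so all usage counts are invariant) is exactly the point such arguments usually gloss over. What your route buys: the sum-of-squares form makes the nonnegativity of the $\gamma_k$ transparent, gives the finer unimodularity-type statement that every such minor of $I_p(T)$ is $0$ or $\pm 1$, and replaces the paper's closing induction by the cycle argument. What it costs: you must invoke Cauchy--Binet and the principal-minor expansion of the characteristic polynomial, whereas the paper's argument is self-contained and treats the variable $x$ (equivalently, the trivial paths) in a single pass.
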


\begin{proof}
It is slightly more convenient for the proof to replace $x$ by $-x$ and consider
$$\det(x I + C(T)) = (-1)^n \Gamma_T(-x) = \sum_{k=0}^n \gamma_k(T) x^{n-k}.$$
Let $m_{ij}$ be the entry in the $i$-th row, $j$-th column of $x I + C(T)$. By definition of $C(T)$, we have
$$m_{ij} = \begin{cases} x + \ell(v_i) & i = j, \\ \ell(v_i \vee v_j) & i \neq j. \end{cases}$$
We apply the Leibniz formula for the determinant to obtain
\begin{equation}\label{eq:leibniz}
\det(x I + C(T)) = \sum_{\sigma \in S_n} \sgn \sigma \prod_{i=1}^n m_{i,\sigma(i)}.
\end{equation}
We say that a collection $\Pc$ of upward paths is compatible with a permutation $\sigma$ if the following holds for all $i$:
\begin{itemize}
\item the upward path $P_i$ starting at $v_i$ is trivial, and $\sigma(i) = i$, or
\item both ends of the last edge of $P_i$ are ancestors of $v_{\sigma(i)}$ (possibly, one of them is $v_i$ itself if $\sigma(i) = i$).
\end{itemize}
Note that $\ell(v_i \vee v_{\sigma(i)})$ is the number of possibilities for $P_i$ satisfying this property, except when $i = \sigma(i)$. In the latter case, the number of possibilities is $\ell(v_i) + 1$, since the trivial path is included as well. So writing $e(\Pc)$ for the number of trivial paths occurring in a collection $\Pc$, we get
$$\prod_{i=1}^n m_{i,\sigma(i)} = \sum_{\substack{\Pc \\ \Pc, \sigma \text{ compatible}}} x^{e(\Pc)}.$$
We plug this into~\eqref{eq:leibniz} and interchange the order of summation:
\begin{equation}\label{eq:det_path_collections}
\det(x I + C(T)) = \sum_{\Pc} x^{e(\Pc)} \sum_{\substack{\sigma \\ \Pc, \sigma \text{ compatible}}} \sgn \sigma.
\end{equation}
Suppose first that $\Pc$ is not an edge-disjoint collection (thus an ``intersecting'' collection). We construct another collection $\Pc^*$ in the following way: consider the (lexicographically) smallest pair of indices $i,j$ such that the paths $P_i$ and $P_j$ emanating respectively from $v_i$ and $v_j$ have a common edge. Now $\Pc^*$ is obtained by interchanging the parts of $P_i$ and $P_j$ starting from the lowest common edge (going up). It is clear that this defines an involution on the set of intersecting collections of upward paths. Importantly, if $\Pc$ is compatible with $\sigma$, then $\Pc^*$ is compatible with a permutation $\sigma^*$ that differs from $\sigma$ only by a transposition of $i$ and $j$. Since $\sigma$ and $\sigma^*$ have opposite signs, it follows that
$$\sum_{\substack{\sigma \\ \Pc, \sigma \text{ compatible}}} \sgn \sigma = - \sum_{\substack{\sigma \\ \Pc^*, \sigma \text{ compatible}}} \sgn \sigma,$$
which means that all intersecting collections $\Pc$ cancel pairwise in~\eqref{eq:det_path_collections} (if $\Pc = \Pc^*$, then the sum over $\sigma$ is $0$). Thus we are left to consider edge-disjoint collections. 

Now we claim that the only permutation that is compatible with an edge-disjoint collection is the identity, from which the desired formula follows immediately. We prove this claim by induction on the number of leaves. If there is only a single leaf, then the identity is the only permutation, so the claim is trivial. Otherwise, let $\Pc$ be an edge-disjoint collection of upward paths, and consider an internal vertex $w$ with more than one child whose level is maximal among all such vertices. Let $v_{i_1},v_{i_2},\ldots,v_{i_r}$ be the leaves of which $w$ is an ancestor. By the choice of $w$, $r \geq 2$, and the paths from $w$ to these leaves are pairwise edge-disjoint. Since $\Pc$ is an edge-disjoint collection, there must be at least one leaf $v_{i_s}$ ($s \in \{1,2,\ldots,r\}$) such that the upward path starting from $v_{i_s}$ does not go beyond $w$. Thus for $\sigma$ to be compatible with $\Pc$, we need to have $\sigma(i_s) = i_s$. Now remove the path between $v_{i_s}$ and $w$ from the tree, and invoke the induction hypothesis on the remaining tree (and the remaining collection of upward paths, which is clearly still edge-disjoint). This completes the induction and thus the proof.
\end{proof}

A $d$-ary tree is a rooted tree for which each internal vertex has precisely $d$ children. For these trees, we find that the characteristic polynomial, evaluated at one specific point, only depends on the number of leaves, but not the tree itself. This is particularly interesting for binary trees ($d=2$), where this value yields the number of edge-disjoint collections of upward paths. A comparable result is the fact that the determinant of the distance matrix of trees only depends on the number of vertices, but not the tree structure (a theorem due to Graham and Pollak \cite{Graham1971addressing}; see also \cite{Edelberg1976distance,Graham1978distance}).

\begin{thm}\label{thm:det_dary}
Let $T$ be a $d$-ary tree with $n$ leaves. We have
$$\det \Big(\frac{1}{d-1} I + C(T) \Big) = (-1)^n \Gamma_T \Big( - \frac{1}{d-1} \Big) = (d-1)^{-n} d^{d (n-1)/(d-1)}.$$
Equivalently, if $\operatorname{int}(T)$ is the number of internal vertices,
$$\det \big( I + (d-1) C(T) \big) = d^{d \operatorname{int}(T)}.$$
In particular, a binary tree with $n$ leaves has $4^{n-1}$ edge-disjoint collections of upward paths, which is independent of the precise shape of the tree.
\end{thm}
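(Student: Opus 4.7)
My plan is to first observe that the two displayed formulations in the theorem are equivalent: the second is obtained from the first by multiplying by $(d-1)^n$ (which factors out of each row of the $n\times n$ matrix) and invoking the elementary identity $\operatorname{int}(T) = (n-1)/(d-1)$ for a $d$-ary tree with $n$ leaves (from equating the two standard ways of counting edges). I will therefore prove the equivalent form $\det(I + (d-1)C(T)) = d^{d\operatorname{int}(T)}$ by induction on $\operatorname{int}(T)$.

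The base case $\operatorname{int}(T) = 0$ means $T$ is a single leaf, and both sides equal $1$. For the inductive step, suppose $T$ has root branches $T_1,\ldots,T_d$ (since the root is internal, it has exactly $d$ children). The block decomposition~\eqref{eq:block_diag} factors the determinant as $\prod_{i=1}^d \det(I + (d-1)(C(T_i) + E_i))$. Writing $E_i = \mI\mI^t$, the matrix determinant lemma gives
\begin{equation*}
\det\bigl(I + (d-1)C(T_i) + (d-1)\mI\mI^t\bigr) = \det\bigl(I + (d-1)C(T_i)\bigr)\cdot\bigl(1 + (d-1)\mu(T_i)\bigr),
\end{equation*}
where I introduce the auxiliary quantity $\mu(S) := \mI^t (I + (d-1)C(S))^{-1}\mI$. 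The inverse exists because the first theorem of Section~2 shows that every eigenvalue of $I + (d-1)C(S)$ is at least $d$.

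The crux is a parallel induction establishing the identity $\mu(T) = 1$ for every $d$-ary tree $T$. Sherman--Morrison applied to the same rank-one perturbations yields
\begin{equation*}
\mu(T) = \sum_{i=1}^d \frac{\mu(T_i)}{1 + (d-1)\mu(T_i)},
\end{equation*}
so if $\mu(T_i) = 1$ for each $i$, then $\mu(T) = d\cdot\tfrac{1}{d} = 1$, closing the auxiliary induction. With $\mu(T_i) = 1$ in hand, each factor $1 + (d-1)\mu(T_i)$ in the determinant recursion collapses to $d$, and combining this with the inductive hypothesis $\det(I + (d-1)C(T_i)) = d^{d\operatorname{int}(T_i)}$ and $\operatorname{int}(T) = 1 + \sum_i \operatorname{int}(T_i)$ gives $\det(I + (d-1)C(T)) = d^d\prod_i d^{d\operatorname{int}(T_i)} = d^{d\operatorname{int}(T)}$.

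The main obstacle I anticipate is discovering the right auxiliary invariant $\mu(T)$; once one conjectures that $\mu(T) \equiv 1$ on all $d$-ary trees, the computations fall out mechanically. For the binary case $d=2$, the theorem reduces to $\det(I + C(T)) = 4^{n-1}$, and combining this with Theorem~\ref{thm:edge-disjoint} evaluated at $x = -1$ identifies the value with the total number of edge-disjoint collections of upward paths, establishing the asserted shape-independence.
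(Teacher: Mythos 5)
Your argument is correct, but it takes a genuinely different route from the paper. The paper also inducts on $\operatorname{int}(T)$, but ``bottom-up'': it picks a deepest internal vertex, whose $d$ children are all leaves, and shows by explicit row and column operations that deleting those leaves divides the determinant $\det(\tfrac{1}{d-1}I+C(T))$ by exactly $d^d/(d-1)^{d-1}$. You instead work ``top-down'' via the root decomposition~\eqref{eq:block_diag}, treating each block as the rank-one perturbation $I+(d-1)C(T_i)+(d-1)\mI\mI^t$ and applying the matrix determinant lemma together with Sherman--Morrison; the whole proof then hinges on the auxiliary invariant $\mu(T)=\mI^t(I+(d-1)C(T))^{-1}\mI\equiv 1$, whose own induction closes cleanly since $\mu(T)=\sum_i \mu(T_i)/(1+(d-1)\mu(T_i))=d\cdot\tfrac1d$. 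I checked the Sherman--Morrison computation and the determinant recursion $\det(I+(d-1)C(T))=d^d\prod_i\det(I+(d-1)C(T_i))$; both are right, and the base cases match. One cosmetic point: your invertibility justification (``every eigenvalue of $I+(d-1)C(S)$ is at least $d$'') only applies when the branch $S$ has more than one vertex, since for a single-leaf branch $C(S)=[0]$ and the eigenvalue is $1$; but positive semidefiniteness of $C(S)$ already gives invertibility in all cases, so nothing breaks. What your approach buys is the identity $\mu(T)=1$ itself --- a shape-independent statement about the row sums of $(I+(d-1)C(T))^{-1}$ that is arguably as interesting as the determinant formula and does not appear in the paper; it also meshes with the branch recursion used throughout Section~2. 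What the paper's approach buys is elementarity: it needs nothing beyond Laplace expansion and row reduction, and it localizes the contribution $d^d/(d-1)^{d-1}$ to each internal vertex very concretely. Your final reduction of the $d=2$ case to Theorem~\ref{thm:edge-disjoint} at $x=-1$ matches the paper.
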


\begin{proof}
We prove the statement by induction on the number of internal vertices: it is well known that a $d$-ary tree with $n$ leaves has $\frac{n-1}{d-1}$ internal vertices. If the tree only consists of a single leaf, so that $L(T) = n = 1$ and there are no internal vertices, the formula reduces to $\frac{1}{d-1} = \frac{1}{d-1}$ and is thus readily seen to hold.

For the induction step, consider an internal vertex $v$ whose level is maximal. All its children are leaves, and without loss of generality we can assume that these children correspond to the last $d$ rows of $C(T)$. Thus the matrix $C(T) + \frac{1}{d-1} I $ has the form
$$C(T) + \frac{1}{d-1} I = \begin{bmatrix} & & & & \\ \qquad \mathbf{B} \qquad & \mathbf{a}^t & \mathbf{a}^t & \cdots & \mathbf{a}^t \\ & & & & \\ \mathbf{a} & k+\frac{d}{d-1} & k & \cdots & k \\ \mathbf{a} & k & k+\frac{d}{d-1} & \cdots & k \\ \vdots & \vdots & \vdots & \ddots & \vdots \\
\mathbf{a} & k & k & \cdots & k+\frac{d}{d-1}  \end{bmatrix},$$
where $\mathbf{B}$ is a matrix, $\mathbf{a}$ a row vector, and $k$ is the level of vertex $v$.
If the $d$ leaves are removed, so that $v$ becomes a leaf, the resulting tree $T^\prime$ is again a $d$-ary tree with $L(T^\prime) = L(T) - (d-1)$ (and $\operatorname{int}(T^\prime) = \operatorname{int}(T) - 1$), and we have
$$\frac{1}{d-1} I  + C(T^\prime) = \begin{bmatrix} & \\ \qquad \mathbf{B} \qquad & \mathbf{a}^t \\ & \\ \mathbf{a} & k + \frac{1}{d-1} \end{bmatrix}.$$
We shall prove that
$$\det \Big(\frac{1}{d-1} I + C(T) \Big) = \det \Big(\frac{1}{d-1} I + C(T^\prime) \Big) \cdot \frac{d^d}{(d-1)^{d-1}},$$
so that the desired formula follows directly from the induction hypothesis.

To this end, we subtract the first of the final $d$ rows of $\frac{1}{d-1} I + C(T)$ from the other $d-1$ rows, and then the second of the last $d$ columns from the final $d-2$ columns to obtain
\begin{align*}
\det \Big(\frac{1}{d-1} I + C(T) \Big) &= \begin{vmatrix} & & & & & \\ \qquad \mathbf{B} \qquad & \mathbf{a}^t & \mathbf{a}^t & \mathbf{a}^t & \cdots & \mathbf{a}^t \\ & & & & & \\ \mathbf{a} & k+\frac{d}{d-1} & k & k & \cdots & k \\ \mathbf{a} & k & k+\frac{d}{d-1} & k & \cdots & k \\ \mathbf{a} & k & k & k+\frac{d}{d-1} & \cdots & k \\
\vdots & \vdots & \vdots & \vdots & \ddots & \vdots \\
\mathbf{a} & k & k & k & \cdots & k+\frac{d}{d-1}  \end{vmatrix} \\
&= \begin{vmatrix} & & & & & \\ \qquad \mathbf{B} \qquad  & \mathbf{a}^t & \mathbf{a}^t & \mathbf{a}^t & \cdots & \mathbf{a}^t \\ & & & & & \\ \mathbf{a} & k+\frac{d}{d-1} & k & k & \cdots & k \\ \mathbf{0} & -\frac{d}{d-1} & \frac{d}{d-1} & 0 & \cdots & 0 \\ \mathbf{0} & -\frac{d}{d-1} & 0 & \frac{d}{d-1} & \cdots & 0 \\
\vdots & \vdots & \vdots & \vdots & \ddots & \vdots \\
\mathbf{0} & -\frac{d}{d-1} & 0 & 0 & \cdots & \frac{d}{d-1}  \end{vmatrix} \\
&= \begin{vmatrix} & & & & & \\ \qquad\mathbf{B}\qquad & \mathbf{a}^t & \mathbf{a}^t & \mathbf{0}^t & \cdots & \mathbf{0}^t \\ & & & & & \\ \mathbf{a} & k+\frac{d}{d-1} & k & 0 & \cdots & 0 \\ \mathbf{0} & -\frac{d}{d-1} & \frac{d}{d-1} & -\frac{d}{d-1} & \cdots & -\frac{d}{d-1} \\ \mathbf{0} & -\frac{d}{d-1} & 0 & \frac{d}{d-1} & \cdots & 0 \\
\vdots & \vdots & \vdots & \vdots & \ddots & \vdots \\
\mathbf{0} & -\frac{d}{d-1} & 0 & 0 & \cdots & \frac{d}{d-1}  \end{vmatrix}.
\end{align*}
Now add each of the last $d-2$ rows to the $(d-1)$-th row from the bottom:
$$\det \Big(\frac{1}{d-1} I + C(T) \Big) = \begin{vmatrix} & & & & & \\ \qquad\mathbf{B}\qquad & \mathbf{a}^t & \mathbf{a}^t & \mathbf{0}^t & \cdots & \mathbf{0}^t \\ & & & & & \\ \mathbf{a} & k+\frac{d}{d-1} & k & 0 & \cdots & 0 \\ \mathbf{0} & -d & \frac{d}{d-1} & 0 & \cdots & 0 \\ \mathbf{0} & -\frac{d}{d-1} & 0 & \frac{d}{d-1} & \cdots & 0 \\
\vdots & \vdots & \vdots & \vdots & \ddots & \vdots \\
\mathbf{0} & -\frac{d}{d-1} & 0 & 0 & \cdots & \frac{d}{d-1} \end{vmatrix}.$$
Next, expand the determinant with respect to the last $d-2$ columns, one by one:
$$\det \Big(\frac{1}{d-1} I + C(T) \Big) = \Big( \frac{d}{d-1} \Big)^{d-2} \begin{vmatrix} & & \\ \qquad\mathbf{B}\qquad & \mathbf{a}^t & \mathbf{a}^t \\ & & \\ \mathbf{a} & k+\frac{d}{d-1} & k \\ \mathbf{0} & -d & \frac{d}{d-1} \end{vmatrix}.$$
Finally, subtract the second to last column from the last in the remaining matrix, then add $\frac{d-1}{d}$ times the last column back to the previous column:
\begin{align*}
\det \Big(\frac{1}{d-1} I + C(T) \Big)
&= \Big( \frac{d}{d-1} \Big)^{d-2} \begin{vmatrix} & & \\ \qquad\mathbf{B}\qquad & \mathbf{a}^t & \mathbf{0}^t \\ & & \\ \mathbf{a} & k+\frac{d}{d-1} & -\frac{d}{d-1} \\ \mathbf{0} & -d & \frac{d^2}{d-1} \end{vmatrix} \\
&= \Big( \frac{d}{d-1} \Big)^{d-2} \begin{vmatrix} & & \\ \qquad\mathbf{B}\qquad & \mathbf{a}^t & \mathbf{0}^t \\ & & \\ \mathbf{a} & k+\frac{1}{d-1} & -\frac{d}{d-1} \\ \mathbf{0} & 0 & \frac{d^2}{d-1} \end{vmatrix} \\
&= \frac{d^d}{(d-1)^{d-1}} \begin{vmatrix} & \\ \qquad\mathbf{B}\qquad & \mathbf{a}^t \\ & \\ \mathbf{a} & k + \frac{1}{d-1} \end{vmatrix} \\
&= \det \Big( \frac{1}{d-1} I + C(T^\prime) \Big),
\end{align*}
which completes the induction. To transform
$$\det \Big(\frac{1}{d-1} I + C(T) \Big) = (d-1)^{-n} d^{d (n-1)/(d-1)}$$
into
$$\det \big( I + (d-1) C(T) \big) = d^{d \operatorname{int}(T)},$$
one simply needs to recall that $\operatorname{int}(T) = \frac{n-1}{d-1}$. The special case $d=2$ yields the number of edge-disjoint collections of upward paths by Theorem~\ref{thm:edge-disjoint}.
\end{proof}

\bibliographystyle{abbrv}
\bibliography{AncestralMatrix}

\end{document}